\definecolor{myblue}{RGB}{80,80,160}
\newtheorem{thm}{Theorem}
\newtheorem{corollary}{Corollary}
\newtheorem{conjecture}{Conjecture}
\newtheorem{prop}{Proposition}
\newtheorem{lemme}{Lemma}
\newtheorem{obs}{Observation}
\title{\textbf{Completely Independent Spanning Trees in Split Graphs: Structural Properties and Complexity}}
\author{Mohammed Lalou, Nader Mbarek, Abdallah Skender, Olivier Togni\\
        Université Bourgogne Europe, LIB UR 7534, Dijon}
\begin{document}
\maketitle
\begin{abstract}
    We study completely independent spanning trees (CIST), \textit{i.e.}, trees that are both edge-disjoint and internally vertex-disjoint, in split graphs. We establish a correspondence between the existence of CIST in a split graph and some types of hypergraph colorings (panchromatic and bipanchromatic colorings) of its associated hypergraph, allowing us to obtain lower and upper bounds on the number of CIST. Using these relations, we prove that the problem of the existence of two CIST in a split graph is NP-complete. Finally, we formulate a conjecture on the bipanchromatic number of a hypergraph related to the results obtained for the number of CIST.
\end{abstract}

\section{Introduction}
The graphs considered in this paper are undirected, finite, and simple. For a graph $G$, the sets $V(G)$ and $E(G)$ are its vertex and edge sets, respectively. The set of vertices adjacent to a vertex $x$ is its {\em neighborhood} and is denoted $N_G(x)$, and the {\em degree} of $x$ in $G$ is $d_G(x) = |N_G(x)|$. The graph $G$ is {\em bipartite} if $V(G)$ admits a partition into two classes $V_1 \cup V_2$ such that each edge has one endpoint in each of the classes. Let $x,y$ be two vertices of $G$. A {\em $(x,y)$-path} is a sequence of vertices starting with $x$ and ending with $y$, where each consecutive pair is an edge of $E(G)$. A graph is {\em connected} if there is a path between any pair of its vertices, and it is {\em $k$-connected} if it remains connected after deleting any set of $k-1$ vertices. A graph is {\em acyclic} if it does not contain any cycles. A {\em tree} $T$ is an acyclic connected graph. A {\em spanning tree} $T$ of a graph $G$ is a tree with $V(T) = V(G)$ and $E(T) \subseteq E(G)$. A vertex $x \in V(T)$ is an {\em internal (or inner)} vertex if $d_T(x) \geq 2$, and otherwise, it is a {\em leaf}. If $x$ is an internal vertex of a tree $T$ and $y$ is a leaf of $T$, then $x$ is said to {\em cover} $y$ in $T$ when $x$ is adjacent to $y$ in $T$.

Let $G$ be a graph and $P_1,P_2$ two $(x,y)$-paths. $P_1$ and $P_2$ are {\em internally-disjoint} if they do not have an internal vertex in common; they are {\em openly-disjoint} if they are both edge-disjoint and internally-disjoint. Let $k\geq 2$ be an integer and $T_1, \dots , T_k$ be spanning trees of a graph $G$. If for any pair of vertices $x,y \in V(G)$, the $(x,y)$-paths in $T_1, \dots , T_k$ are openly-disjoint, then $T_1, \dots , T_k$ are called {\em completely independent spanning trees} (CIST for short) of $G$.

Independent spanning trees have several applications in various domains \cite{add1,add2} and serve as a key tool for enhancing the fault tolerance of communication systems. Indeed, a network topology is usually represented by a graph, where vertices represent devices and edges represent the physical connections between them. Providing $k$ CIST in such topologies ensures that the network remains operational even in the presence of $k-1$ vertex or edge failures. 

The practical interests of CIST necessitate characterizing them, identifying the maximum number of instances a graph may contain, and proposing constructions for these trees. Hasunuma \cite{2} introduced CIST and provided a first characterization:
\begin{thm}[\cite{2}]
    Let $T_1, \dots, T_k$ be spanning trees in a graph $G$. Then, $T_1, \dots , T_k$ are  completely independent if and only if $T_1, \dots , T_k$ are edge-disjoint and for any vertex $x \in V(G)$ and $1 \leq i \leq k$, there is at most one spanning tree $T_i$ such that $d_{T_i}(x) > 1.$
    \label{thm00}
\end{thm}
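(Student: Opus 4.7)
The plan is to prove both directions separately, with the backward direction being straightforward and the forward direction requiring a short combinatorial argument about partitions induced by deleting a vertex from a tree.

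For the backward direction, I would assume conditions (1) and (2) hold, and fix an arbitrary pair $u,v \in V(G)$. Edge-disjointness of the $(u,v)$-paths across the trees follows immediately from edge-disjointness of the trees themselves. For internal vertex-disjointness, I would suppose by contradiction that the $(u,v)$-paths in some $T_i$ and $T_j$ share an internal vertex $w$. Then $w \notin \{u,v\}$, and the path in $T_i$ uses two edges incident to $w$, forcing $d_{T_i}(w) \geq 2$; the same reasoning gives $d_{T_j}(w) \geq 2$, contradicting condition (2).

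For the forward direction, the edge-disjointness of the $T_\ell$'s is built into the definition of openly-disjoint paths (any shared edge $xy$ would appear in both $(x,y)$-paths). The substantive step is to show that no vertex $x$ can be internal in two trees simultaneously. Assume for contradiction that $x$ is internal in both $T_i$ and $T_j$, and let $\mathcal{P}_i = \{C_1,\dots,C_p\}$ and $\mathcal{P}_j = \{D_1,\dots,D_q\}$ be the partitions of $V(G)\setminus\{x\}$ induced by the connected components of $T_i - x$ and $T_j - x$, respectively, with $p,q \geq 2$. For any $u,v \neq x$, the unique $(u,v)$-path in $T_i$ contains $x$ as an internal vertex iff $u,v$ lie in different classes of $\mathcal{P}_i$, and similarly for $T_j$ and $\mathcal{P}_j$. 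Hence it suffices to find $u,v$ lying in different classes of both partitions.

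I claim such a pair always exists when $p,q \geq 2$. Suppose not: then for every $u \in C_1$ and $v \in C_2$, the vertices $u,v$ share a $\mathcal{P}_j$-class. This forces all of $C_1 \cup C_2$ to lie in a single class $D_0 \in \mathcal{P}_j$. Repeating the argument with the pair $(C_1, C_a)$ for each $a \geq 3$, each such $C_a$ must also lie in a class containing $C_1$, which must be $D_0$; thus $V(G)\setminus\{x\} \subseteq D_0$, contradicting $q \geq 2$. Picking such a pair $u,v$ then shows that the $(u,v)$-paths in $T_i$ and $T_j$ both have $x$ as an internal vertex, contradicting internal disjointness. I expect this partition-intersection argument to be the only delicate point of the proof; the rest is bookkeeping with tree paths.
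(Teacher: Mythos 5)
Your proof is correct: both directions are handled properly, and the partition-intersection claim (that two partitions of $V(G)\setminus\{x\}$, each with at least two classes, always admit a pair of vertices separated by both) is argued soundly. Note, however, that the paper itself offers no proof to compare against --- this is Hasunuma's characterization, quoted as Theorem~\ref{thm00} with a citation to \cite{2} --- so your argument stands as a self-contained justification of a result the paper merely imports.
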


The same author \cite{12} also proved that the problem of determining the existence of two completely independent spanning trees in a graph is NP-Complete and conjectured that any $2k$-connected graph contains $k$ CIST. However, Péterfalvi \cite{4} proved that for any $k \geq 2$, there exists a $k$-connected graph that does not admit two CIST. Due to its complexity, several sufficient conditions have been investigated in order to guarantee the existence of $k$ CIST in graphs \cite{5,40,22,48,73,38,37}, most of them are derived from Hamiltonicity conditions. Additionally, various classes have been studied, including planar graphs \cite{12}, complete graphs \cite{31,6}, and $k$-trees \cite{61}. For more details, the reader may refer to \cite{82}. 

Another essential characterization for studying CIST that will be used in this paper has been provided by Araki~\cite{5}: Let $G$ be a graph, and let $V_1 \cup V_2 \cup \dots \cup V_k$ be a partition of its vertices. The partition $V(G) = V_1 \cup V_2 \cup \dots \cup V_k$ is a {\em $k$-CIST-partition} if $G[V_i]$ is connected for each $i= 1,2,\dots, k$, and the bipartite subgraph $B(V_i,V_j,G)$ of $G$ induced by $V_i \cup V_j$ has no tree component for each $1 \leq i < j \leq k $, \textit{i.e.}, every connected component $H$ of $B(V_i,V_j,G)$ satisfies $|E(H)| \geq |V(H)|$.
\begin{thm}[\cite{5}]\label{thm2}
    A graph $G$ admits $k$ completely independent spanning trees if and only if it has a $k$-CIST-partition.
\end{thm}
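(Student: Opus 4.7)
The proof uses Hasunuma's characterization (Theorem \ref{thm00}) in both directions.

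For the direction $k$-CIST-partition $\Rightarrow$ $k$ CIST, given $V_1, \ldots, V_k$ satisfying (i) and (ii), I build each $T_i$ as a spanning tree of $G[V_i]$ (which exists by (i)) together with, for every $v \notin V_i$, one edge attaching $v$ as a leaf to some vertex of $V_i$. The delicate step is choosing these attachment edges simultaneously across all trees so that the $T_i$ remain pairwise edge-disjoint. For a fixed pair $(i,j)$, this amounts to picking, for each $v \in V_j$, one edge to $V_i$ (to be used by $T_i$) and, for each $u \in V_i$, one edge to $V_j$ (to be used by $T_j$), with all $|V_i|+|V_j|$ chosen edges distinct; equivalently, one orients a spanning subgraph of $B(V_i,V_j,G)$ so that every vertex has in-degree exactly~$1$. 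Property (ii) is exactly what makes this feasible: each component $H$ of $B(V_i,V_j,G)$ satisfies $|E(H)|\ge |V(H)|$, hence $H$ contains a spanning unicyclic subgraph (any spanning tree plus one chord); orienting its unique cycle as a directed cycle and every remaining edge outward from that cycle gives the required orientation. Each $T_i$ then has $|V(G)|-1$ edges, is connected, and has all its internal vertices contained in $V_i$, so Theorem \ref{thm00} certifies that $T_1, \ldots, T_k$ form $k$ CIST.

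For the converse $k$ CIST $\Rightarrow$ $k$-CIST-partition, let $I_i$ be the set of internal vertices of $T_i$; by Theorem \ref{thm00} the $I_i$ are pairwise disjoint. Let $L$ denote the vertices that are leaves in every tree, and set $V_1 := I_1 \cup L$ and $V_i := I_i$ for $i \ge 2$. Condition (i) follows because the internal vertices of any tree induce a subtree (so $G[I_i]$ is connected) and every $v \in L$ is adjacent in $G$ to its $T_1$-neighbor, which is internal in $T_1$ and therefore belongs to $I_1$. For (ii) I use $T_i$ and $T_j$ themselves as a witness: each $v \in V_j$ is a leaf of $T_i$ whose $T_i$-neighbor lies in $I_i \subseteq V_i$, and symmetrically for each $u \in V_i$. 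Edge-disjointness of $T_i$ and $T_j$ makes these two edge sets disjoint, and a direct count on each component of their union gives $|E| = |V|$. Passing to the full $B(V_i,V_j,G)$ can only add edges and merge components, so every component of $B(V_i,V_j,G)$ still satisfies $|E|\ge |V|$ and in particular is not a tree.

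The main obstacle is the simultaneous edge-selection in the first direction: recognizing that ``no tree component'' is precisely the combinatorial condition needed to produce a spanning orientation of $B(V_i,V_j,G)$ with every in-degree equal to $1$. Once this reformulation is identified, both directions reduce to careful bookkeeping with the tree edges, with the only small corner case being $|V(G)|\le 2$, where the statement is trivial.
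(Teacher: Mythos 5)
Your proof is correct. Note that the paper itself gives no proof of this statement: it is Araki's characterization, cited from \cite{5}. Your argument is essentially the standard one from that source — building each $T_i$ from a spanning tree of $G[V_i]$ plus leaf attachments obtained from an in-degree-one orientation of a spanning unicyclic subgraph of each component of $B(V_i,V_j,G)$ (which is exactly what the ``no tree component'' condition permits), and, conversely, taking the internal-vertex sets $I_i$ (with the all-leaf vertices absorbed into one class) and using edge-disjointness of the trees to inject the vertices of each component of $B(V_i,V_j,G)$ into its edges — so there is nothing to flag beyond the $|V(G)|\le 2$ degeneracy you already handled.
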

A {\em split graph}  $G = (D \cup I,E)$ is a graph in which the vertices can be partitioned into a clique $D$ and an independent set $I$. Chen \textit{et al.} \cite{29} gave sufficient conditions for a split graph to have two completely independent spanning trees:
\begin{thm}[\cite{29}]
    If $G$ is a Hamiltonian split graph such that $|D| > \max \{ 3,|I| \}$, then $G$ has two CIST.
    \label{thm3}
\end{thm}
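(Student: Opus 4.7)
The plan is to apply Theorem~\ref{thm2} by constructing an asymmetric 2-CIST-partition of the form $V_1 = D_1$, $V_2 = D_2 \cup I$, where $D = D_1 \cup D_2$ is a bipartition of the clique. Two properties make such a choice a valid 2-CIST-partition: (i) every $u \in I$ has at least one neighbor in $D_1$ and at least one in $D_2$; and (ii) $|D_1| \geq 2$ and $|D_2| \geq 2$. Indeed, $G[V_1]$ is a clique and hence connected; under (i), $G[V_2]$ is connected because $D_2$ is a clique that every $u \in I$ reaches; and under (i) combined with (ii) the bipartite graph $B(V_1, V_2, G)$ is connected (via the complete bipartite subgraph on $D_1 \cup D_2$, which every $u \in I$ touches) and contains a $4$-cycle inside $K_{|D_1|,|D_2|}$, so it has no tree component.

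To achieve (i), I would exploit Hamiltonicity. Fix a Hamiltonian cycle $C$; since $I$ is independent, the two $C$-neighbors of each $u \in I$ lie in $D$. Let $H$ be the auxiliary graph on vertex set $D$ whose edges are precisely these pairs (one per $u \in I$). Any proper $2$-coloring of $H$ induces a partition $D = D_1 \cup D_2$ that satisfies (i). The crux is therefore to show that $H$ is bipartite. List the $D$-vertices in the cyclic order in which they appear on $C$; the independence of $I$ ensures that consecutive $D$-vertices on $C$ are separated by at most one $I$-vertex, so the edge set of $H$ is a subset of the edges of the $|D|$-cycle on $D$ (in that cyclic order). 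Since $|I| < |D|$, at least one of these edges is absent from $H$, hence $H$ is a subgraph of a path and thus a forest, which is bipartite.

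For condition (ii), I would use the second hypothesis $|D| > 3$: any proper $2$-coloring of $H$ leaves enough slack to enforce both $|D_1| \geq 2$ and $|D_2| \geq 2$, either by distributing the isolated vertices of $H$ among the two classes, or by swapping the colors inside a component of $H$ with more vertices of one color than the other. The degenerate case $|I| = 0$ is not covered by this construction and must be dispatched separately: $G$ is then a clique on at least four vertices, which is known to admit two CIST. The main obstacle I foresee is the forest argument for $H$, since it is the only step in which the hypothesis $|D| > |I|$ is actually used; the remaining verifications are routine bookkeeping.
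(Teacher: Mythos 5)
Your proof is essentially correct, but note that the paper itself gives no proof of Theorem~\ref{thm3}: it is quoted from Chen \textit{et al.}~\cite{29}, so there is no internal argument to compare against. What you have in effect done is re-derive the result via the machinery the paper develops later: your conditions (i) and (ii) on the bipartition $D = D_1 \cup D_2$ say precisely that coloring $D_1$ with color $1$ and $D_2$ with color $2$ is a bipanchromatic $2$-coloring of $H(G)$ (every hyperedge $N_G(x)$, $x \in I$, meets both classes, and each class has at least two vertices), and your verification that $V_1 = D_1$, $V_2 = D_2 \cup I$ is a $2$-CIST-partition is a special case of the proof of Theorem~\ref{lemme1}, which assigns the $I$-vertices arbitrarily (putting them all in $V_2$ works equally well). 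The genuinely new ingredient is the Hamiltonicity argument, and it is sound: since $I$ is independent, each gap between consecutive $D$-vertices on the Hamiltonian cycle contains at most one $I$-vertex, so the auxiliary graph $H$ has exactly $|I|$ edges, all lying on the $|D|$-cycle given by the cyclic order of $D$, and $|I| < |D|$ leaves at least one gap empty, so $H$ is a disjoint union of paths, hence bipartite. The only sketchy step is (ii): the existence of a proper $2$-coloring with both classes of size at least $2$ is true for disjoint unions of paths on $|D| \geq 4$ vertices (a short case analysis over components of order $1$, $2$, $3$ versus a component whose bipartition has both sides of size at least $2$), but it is not a generic fact about forests --- the star $K_{1,3}$ admits no such coloring --- so you should explicitly invoke the path structure you established; this is also exactly where $|D| > 3$ enters, just as $|D| > |I|$ enters only in the forest argument. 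With that detail written out, and the degenerate case $I = \emptyset$ dispatched (where in fact your construction still works, since $H$ is then edgeless), the proof is complete, and it actually yields slightly more than Theorem~\ref{thm3}: under these hypotheses $H(G)$ is bipanchromatically $2$-colorable, which connects the cited result to Theorem~\ref{lemme1}.
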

Let $w(G)$ be the number of connected components of a graph $G$. Chen \textit{et al.} \cite{29} also proved that if $G$ is a Hamiltonian split graph and $\tau(G) >1$, then $G$ has two CIST, where $\tau(G) = \min \{ \frac{|S|}{w(G-S)} : w(G-S) \geq 2 , S \subset V\}$ is the {\em toughness} of $G$. 

In this paper, we continue the study of the CIST problem on split graphs and give a necessary condition relating the hypergraph panchromatic coloring problem and the CIST existence problem in this class of graphs. We also introduce a variant of the panchromatic coloring called bipanchromatic coloring to obtain sufficient conditions for CIST in split graphs. The previous conditions allow us to establish tight bounds on the maximum number of CIST. Moreover, we prove that the CIST problem in split graphs is NP-Complete and propose integer linear programming models that allow us to formulate a conjecture for the hypergraph panchromatic coloring problem. 

The rest of this paper is organized as follows. Section \ref{sec2} provides the preliminaries used throughout this paper, specifically the panchromatic and bipanchromatic coloring of hypergraphs, as well as the notion of a corresponding hypergraph to a split graph. Section \ref{sec3} presents results on the number of CIST in split graphs according to the panchromatic and bipanchromatic colorings of hypergraphs. Section \ref{sec4} proves that the CIST problem in split graphs is NP-complete. Section \ref{sec5} concludes the paper by giving some research directions. Finally, in Appendix \ref{appA}, we provide ILP models for the panchromatic and bipanchromatic coloring problems. 

\section{Preliminaries} \label{sec2}
A hypergraph $H = (V, \mathcal{E}) $ is a generalization of a graph in which an edge $e \in \mathcal{E}$, called a {\em hyperedge}, can join any number of vertices, \textit{i.e.} $\mathcal{E} \subset \mathcal{P}(V)$ where $\mathcal{P}(V)$ is the set of all subsets of $V$. It is {\em $k$-uniform} if every hyperedge contains exactly $k$ vertices. Given a split graph $G = (D \cup I,E)$, we define its {\em corresponding hypergraph} $H(G) = (D,\mathcal{E})$ with $\mathcal{E} = \{ N_G(x), x \in I\}$ (see Figure \ref{alpha}). Obviously, every split graph has exactly one corresponding hypergraph and vice versa. The corresponding split graph of a hypergraph $H$ is denoted $G(H)$, and the corresponding hyperedge in $H(G)$ of a vertex $x \in I$ is denoted by $e_x$. Without loss of generality, we assume that every vertex of $D$ is adjacent to at least one vertex of $I$; otherwise, a vertex in $D$ with no neighbor in $I$ can be reassigned to $I$ without changing the split structure.
 \begin{figure}[ht]
    \begin{center}
    \begin{tikzpicture}[scale=0.8]
        \node[rectangle,draw=black,fill=green] (01) at (-8,0) {};
        \node[regular polygon, regular polygon sides=3,draw=black,fill=blue,scale=0.5] (02) at (-6,-1) {};
        \node[shape=rectangle,draw=black,fill=green] (03) at (-4,0) {};
        \node[regular polygon, regular polygon sides=3,draw=black,fill=blue,scale=0.5] (04) at (-4,2) {};
        \node[shape=rectangle,draw=black,fill=green] (05) at (-6,3) {};
        \node[shape=circle,draw=black,fill=red] (06) at (-8,2) {};
        \node[shape=circle,draw=black,scale=0.7] (07) at (-6,4.2) {};
        \node[shape=circle,draw=black,scale=0.7] (08) at (-9.5,1) {};
        \node[shape=circle,draw=black,scale=0.7] (09) at (-2.8,1) {};
        \node[shape=circle,draw=black,scale=0.7] (010) at (-9,3) {};
        \node[rectangle,draw=black,fill=green] (1) at (0,0) {};
        \node[regular polygon, regular polygon sides=3,draw=black,fill=blue,scale=0.5] (2) at (2,-1) {};
        \node[shape=rectangle,draw=black,fill=green] (3) at (4,0) {};
        \node[regular polygon, regular polygon sides=3,draw=black,fill=blue,scale=0.5] (4) at (4,2) {};
        \node[shape=rectangle,draw=black,fill=green] (5) at (2,3) {};
        \node[shape=circle,draw=black,fill=red] (6) at (0,2) {};        
        \path [-] (01) edge node {} (04);
        \path [-] (01) edge node {} (02);
        \path [-] (01) edge node {} (06);
        \path [-] (04) edge node {} (03);
        \path [-] (04) edge node {} (05);
        \path [-] (01) edge node {} (03);
        \path [-] (01) edge node {} (05);
        \path [-] (05) edge node {} (03);    
        \path [-] (05) edge node {} (02);    
        \path [-] (06) edge node {} (02);     
        \path [-] (06) edge node {} (03);    
        \path [-] (02) edge node {} (03);
        \path [-] (05) edge node {} (06);
        \path [-] (06) edge node {} (04);    
        \path [-] (04) edge node {} (02); 
        \path [-] (07) edge node {} (04); 
        \path [-] (07) edge node {} (05); 
        \path [-] (07) edge node {} (06); 
        \path [-] (08) edge node {} (01); 
        \path [-] (08) edge node {} (06); 
        \path [-] (09) edge node {} (03); 
        \path [-] (09) edge node {} (06);
        \path [-] (010) edge node {} (01); 
        \path [-] (010) edge node {} (05); 
        \path [-] (010) edge node {} (06); 
        \draw[-] (02) to[out=180, in= -90] (08); 
        \draw[-] (02) to[out=0, in= -90] (09); 
        \begin{scope}[fill opacity=0]
        \filldraw[fill=white] ($(5)+(0,0.5)$)
            to[out=180,in=90] ($(6) + (-0.6,-0.05)$) 
            to[out=-90,in=180] ($(5) + (0,-0.5)$) 
            to[out=0,in=-90] ($(4) + (0.4,0)$)         
            to[out=90,in=0] ($(5) + (0,0.5)$)  ;
        \end{scope}    
        \begin{scope}[fill opacity=0]
        \filldraw[fill=white] ($(3)+(0,0.5)$)
            to[out=180,in=0] ($(2) + (0.6,0.5)$)  
            to[out=180,in=0] ($(6) + (0,0.4)$)  
            to[out=180,in=90] ($(6) + (-0.4,0)$)         
            to[out=-90,in=100] ($(2) + (-0.6,-0.1)$) 
            to[out=-90,in=180] ($(2) + (0,-0.5)$) 
            to[out=0,in=-90] ($(3) + (0.5,0)$)          
            to[out=90,in=0] ($(3) + (0,0.5)$)   ;
        \end{scope}    
        \begin{scope}[fill opacity=0]
        \filldraw[fill=white] ($(1)+(-0.8,0)$)
            to[out=-90,in=180] ($(2) + (0,-0.4)$)  
            to[out=0,in=0] ($(6) + (0.2,0.7)$)  
            to[out=180,in=90] ($(1) + (-0.8,0)$) ;
        \end{scope}    
        \begin{scope}[fill opacity=0]
        \filldraw[fill=white] ($(1)+(0,-0.4)$)
            to[out=0,in=-90] ($(6) + (0.5,0)$)  
            to[out=90,in=180] ($(5) + (0.3,-0.4)$)  
            to[out=0,in=0] ($(5) + (0.3,0.4)$)          
            to[out=180,in=90] ($(6) + (-0.5,0)$)
            to[out=-90,in=180] ($(1) + (0,-0.4)$)   ;
        \end{scope}       
    \end{tikzpicture}
    \caption{A split graph and its corresponding hypergraph}
    \label{alpha}
    \end{center}
 \end{figure}
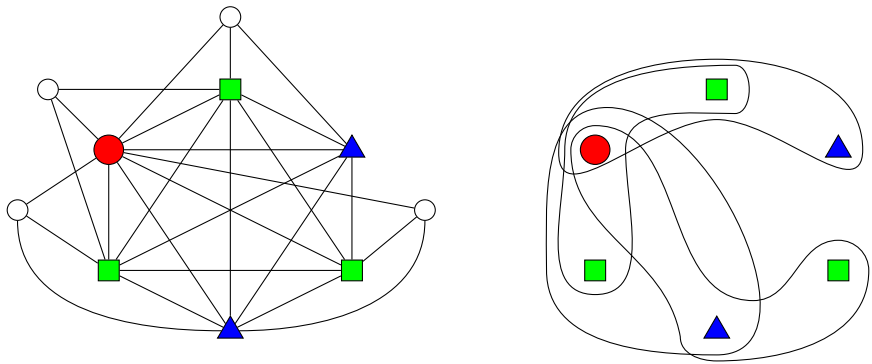

Hypergraph coloring extends the concept of graph coloring: A $k$-coloring of a hypergraph is an assignment of $k$ colors to its vertices so that no hyperedge is monochromatic. A {\em panchromatic} $k$-coloring of $H$ is a coloring of the vertices of $H$ such that each hyperedge contains at least one vertex of each color. A color that appears only once in the hypergraph will be called a {\em unique} color. The problem of deciding whether a hypergraph is $2$-colorable (known as the set splitting problem or Property B) is NP-Complete \cite{garey} and by extension for $k\geq 3$. Since a panchromatic $2$-coloring is a $2$-coloring, the problem of determining whether a hypergraph is panchromatically $2$-colorable is also NP-complete.

We define a variation of panchromatic coloring that will be useful for the CIST problem in split graphs:  If a panchromatic $k$-coloring $\varphi$ of a hypergraph $H$ is such that each of the $k$ colors appears at least twice in $H$ (\textit{i.e.}, $\varphi$ has no unique color), then $\varphi$ is called {\em bipanchromatic}. 
The panchromatic (bipanchromatic, respectively) number, denoted $\chi_p(H)$ ($\chi^2_p(H)$, respectively), is the maximum $k$ such that $H$ admits a panchromatic (bipanchromatic, respectively) $k$-coloring. We have trivially $\chi_p^2(H) \le \chi_p(H)$. But the inequality can be strict: The hypergraph on the right of Figure~\ref{alpha} is such that $\chi_p = 3$ and $\chi^2_p = 2$ (a bipanchromatic $2$-coloring is obtained by changing the color of the red vertex to blue). 

Most of the research works on panchromatic coloring are related to the study of the minimum number of hyperedges in an $n$-uniform hypergraph that does not admit a panchromatic $r$-coloring, denoted by $p(n,r)$. Many sufficient conditions and bounds for $p(n,r)$ have been proposed; for more details, the reader may refer to \cite{colorcritical,246}. 

\section{CIST and hypergraph coloring}\label{sec3}
In this section, we demonstrate a direct correspondence between CIST in split graphs and panchromatic colorings of their associated hypergraphs. We first prove that the existence of $k$ CIST in a split graph $G$ implies that its corresponding hypergraph $H(G)$ is panchromatically $k$-colorable. Then, to analyze the converse, we examine the links between CIST and the presence of colors that appear only once in a panchromatic $k$-coloring of $H(G)$. We conclude with an upper bound on the number of CIST.
\begin{thm}
    Let $k \geq 2$. Let $G= (D \cup I,E)$ be a split graph. If $G$ has $k$ completely independent spanning trees, then $H(G)$ is panchromatically $k$-colorable.
    \label{thm1}
\end{thm}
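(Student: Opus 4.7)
The approach is to invoke the CIST-partition characterization of Theorem~\ref{thm2} and show that the partition, when restricted to the clique side $D$, yields a panchromatic $k$-coloring of $H(G)$.

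First, by Theorem~\ref{thm2}, $G$ admits a $k$-CIST-partition $V(G) = V_1 \cup \cdots \cup V_k$. I would define $\varphi \colon D \to \{1,\ldots,k\}$ by setting $\varphi(v)=i$ whenever $v \in V_i$. The goal becomes showing that for every hyperedge $e_x = N_G(x) \subseteq D$ of $H(G)$ (with $x \in I$), every one of the $k$ colors appears on some vertex of $e_x$. Fix $x \in I$ and let $i_0$ be such that $x \in V_{i_0}$.

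For each color $j \neq i_0$, the bipartite subgraph $B(V_{i_0},V_j,G)$ has no tree component. In particular, the connected component containing $x$ satisfies $|E(H)|\geq |V(H)|$, so $x$ is certainly not isolated in $B(V_{i_0},V_j,G)$. Hence $x$ has at least one neighbor in $V_j$, which, since $I$ is independent, must lie in $V_j \cap D$. So color $j$ appears in $e_x$.

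The delicate case is the color $i_0$ of $x$ itself. Here I would first rule out the singleton possibility $V_{i_0}=\{x\}$: if it held, then for any $j \neq i_0$ the bipartite graph $B(\{x\},V_j,G)$ would either be a star centered at $x$ (if $x$ has some neighbor in $V_j$) or consist of $x$ as an isolated vertex together with the vertices of $V_j$ as further isolated vertices; in every case each component of $B(\{x\},V_j,G)$ is a tree, contradicting the no-tree-component hypothesis. Therefore $|V_{i_0}| \geq 2$, and the connectedness of $G[V_{i_0}]$ forces $x$ to have at least one neighbor in $V_{i_0}$; such a neighbor lies in $V_{i_0} \cap D$ (once more by independence of $I$), so color $i_0$ also appears in $e_x$. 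Combining the two cases, $\varphi$ is a panchromatic $k$-coloring of $H(G)$.

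I expect the only real obstacle to be the argument excluding $V_{i_0}=\{x\}$; once that is done, the remainder is a straightforward translation of the no-tree-component condition into neighbourhood information on the clique side $D$.
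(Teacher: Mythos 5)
Your proposal is correct and takes essentially the same route as the paper: invoke Theorem~\ref{thm2} to obtain a $k$-CIST-partition and color the clique side $D$ by class membership, so that each hyperedge $e_x$ receives all $k$ colors. The only difference is that you carefully justify, via the no-tree-component condition (including ruling out $V_{i_0}=\{x\}$) and the connectedness of $G[V_{i_0}]$, the adjacency claim that the paper simply asserts from the definition of a CIST-partition.
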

\begin{proof}
    Let $G= (D \cup I,E)$ be a split graph, and $T_1,T_2,\dots,T_k$ be completely independent spanning trees of $G$. 
    By Theorem~\ref{thm2}, $G$ admits a CIST-partition $V_1,V_2,\dots,V_k$. Let us construct a panchromatic $k$-coloring $\varphi$ of $H(G)$: Each vertex in $V_i$ is assigned the color $i$, $ 1 \leq i \leq k$.
    By the definition of a CIST-partition, every vertex $x\in I$ is adjacent to a vertex of each of the $k$ spanning trees. Then, the hyperedge $e_x$ contains all the $k$ colors of $\varphi$. Thus, $\varphi$ is a panchromatic $k$-coloring of $H(G)$.
\end{proof}

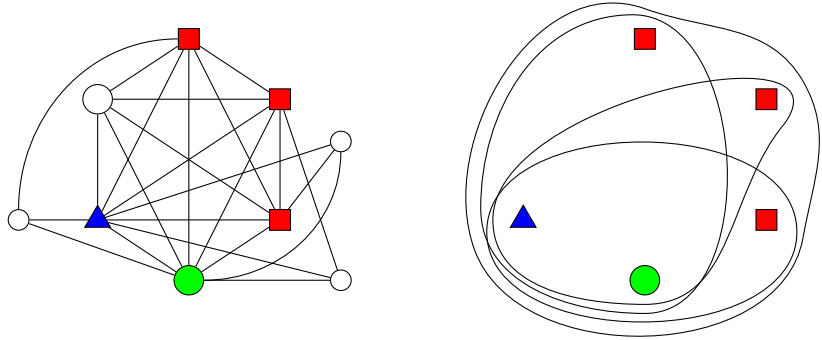
\begin{figure}[ht]
    \begin{center}
    \begin{tikzpicture}[scale=0.8]
        \node[shape=rectangle,draw=black,fill=red] (1) at (9,3) {};
        \node[regular polygon, regular polygon sides=3,draw=black,fill=blue,scale=0.5] (3) at (7,0) {};
        \node[shape=circle,draw=black,fill=green] (4) at (9,-1) {};
        \node[shape=rectangle,draw=black,fill=red] (6) at (11,0) {};
        \node[shape=rectangle,draw=black,fill=red] (5) at (11,2) {};
        \node[shape=rectangle,draw=black,fill=red] (01) at (1.5,3) {};
        \node[shape=circle,draw=black] (02) at (0,2) {};
        \node[regular polygon, regular polygon sides=3,draw=black,fill=blue,scale=0.5] (03) at (0,0) {};
        \node[shape=circle,draw=black,fill=green] (04) at (1.5,-1) {};
        \node[shape=rectangle,draw=black,fill=red] (06) at (3,0) {};
        \node[shape=rectangle,draw=black,fill=red] (05) at (3,2) {};
        \node[shape=circle,draw=black,scale=0.7] (07) at (4,1.3) {};
        \node[shape=circle,draw=black,scale=0.7] (08) at (4,-1) {};
        \node[shape=circle,draw=black,scale=0.7] (09) at (-1.3,0) {};   
        \path [-] (01) edge node {} (04);
        \path [-] (01) edge node {} (02);
        \path [-] (01) edge node {} (06);
        \path [-] (04) edge node {} (03);
        \path [-] (04) edge node {} (05);
        \path [-] (01) edge node {} (03);
        \path [-] (01) edge node {} (05);
        \path [-] (05) edge node {} (03);    
        \path [-] (05) edge node {} (02);    
        \path [-] (06) edge node {} (02);     
        \path [-] (06) edge node {} (03);    
        \path [-] (02) edge node {} (03);
        \path [-] (05) edge node {} (06);
        \path [-] (06) edge node {} (04);    
        \path [-] (04) edge node {} (02);    
        \path [-] (07) edge node {} (06);
        \path [-] (07) edge node {} (03);    
        \path [-] (08) edge node {} (05);
        \path [-] (08) edge node {} (04);    
        \path [-] (08) edge node {} (03);
        \path [-] (09) edge node {} (04);    
        \path [-] (09) edge node {} (03);
        \draw[-] (09) to[out=90, in= 180] (01); 
        \draw[-] (07) to[out=-90, in= 0] (04); 
        \begin{scope}[fill opacity=0]
        \filldraw[fill=yellow!70] ($(5)+(0.3,-0.4)$)
            to[out=-130,in=0] ($(4) + (0,-0.4)$) 
            to[out=180,in=-90] ($(3) + (-0.5,0)$) 
            to[out=90,in=55] ($(5)+(0.3,-0.4)$)  ;
        \end{scope}
        \begin{scope}[fill opacity=0]
        \filldraw[fill=yellow!70] ($(6)+(0.5,-0.2)$)
            to[out=-90,in=-90] ($(3) + (-0.6,-0.2)$)  
            to[out=90,in=90] ($(6) + (0.5,-0.2)$) ;
        \end{scope}
        \begin{scope}[fill opacity=0]
        \filldraw[fill=yellow!70] ($(1)+(-0.2,0.4)$)
            to[out=0,in=0] ($(4) + (0,-0.55)$)
            to[out=180,in=-90] ($(3) + (-0.7,0.2)$)
            to[out=90,in=180] ($(1) + (-0.2,0.4)$) ;
        \end{scope}
        \begin{scope}[fill opacity=0]s
        \filldraw[fill=yellow!70] ($(1)+(0,0.5)$)
            to[out=-20,in=120] ($(5) + (0.6,0.3)$)
            to[out=-60,in=80] ($(6) + (0.6,-0.3)$)    
            to[out=-100,in=-70] ($(3) + (-0.8,-0.5)$)        
            to[out=110,in=160] ($(1) + (0,0.5)$) ;
        \end{scope}   
    \end{tikzpicture}
        \caption{A split graph without $3$ CIST whose corresponding hypergraph is panchromatically $3$-colorable }
        \label{ref1}
    \end{center}
\end{figure}

\begin{obs}
    Péterfalvi's counterexample \cite{4} to Hasunuma's conjecture is a split graph whose corresponding hypergraph is not $k$-colorable for $k \geq 2$. Using the contrapositive of Theorem \ref{thm1}, this also implies the nonexistence of $k$ CIST. 
\end{obs}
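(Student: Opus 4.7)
The plan is to inspect Péterfalvi's construction from \cite{4}, verify that it yields a split graph, and then check that its corresponding hypergraph admits no panchromatic $k$-coloring for $k \geq 2$; the contrapositive of Theorem~\ref{thm1} then delivers the non-existence of $k$ CIST.

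First, I would recall the construction from \cite{4} and identify its split structure explicitly. Péterfalvi's counterexample is built around a clique together with a family of low-degree vertices attached to it, and the attached vertices form an independent set. Reading off the partition $V(G) = D \cup I$, with $D$ the clique and $I$ the attached vertices, shows immediately that the counterexample is split in the sense used throughout this paper, and, up to reassignment, every vertex of $D$ has a neighbor in $I$. One then forms the corresponding hypergraph $H(G) = (D, \mathcal{E})$ with $\mathcal{E} = \{N_G(x) : x \in I\}$.

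The substantive step is to rule out every panchromatic $k$-coloring of $H(G)$ for $k \geq 2$. I would analyze the incidence pattern of the neighborhoods $\{N_G(x)\}_{x \in I}$ and argue that for every candidate assignment of $k$ colors to $D$, at least one hyperedge $N_G(x)$ omits some color: either some neighborhood is too small to carry all $k$ colors, or a pigeonhole argument on the sizes and intersections of the $N_G(x)$ forces a missed color class. I expect this to be the main obstacle, since one must exclude \emph{every} $k$-partition of $D$ rather than merely exhibit a single bad coloring, and the argument depends quantitatively on the precise hyperedge structure dictated by \cite{4}.

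Once non-panchromaticity of $H(G)$ is established, the contrapositive of Theorem~\ref{thm1} immediately yields that $G$ admits no $k$ CIST, giving a hypergraph-theoretic reinterpretation of Péterfalvi's obstruction that is fully consistent with the framework developed in this section.
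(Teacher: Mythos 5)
Your outline gets the framing right (identify the split structure of Péterfalvi's graph, pass to $H(G)$, apply the contrapositive of Theorem~\ref{thm1}), but the one substantive claim --- that $H(G)$ admits no panchromatic $k$-coloring for any $k\geq 2$ --- is exactly the step you leave undone: you only announce that you ``would analyze the incidence pattern'' and run a pigeonhole argument whose details ``depend quantitatively on the precise hyperedge structure''. That is not a proof, and it also misreads where the difficulty lies. In Péterfalvi's construction \cite{4} the non-colorability is not something to be rediscovered by a counting argument: he \emph{starts} from a hypergraph chosen precisely because it has no proper $2$-coloring (no Property~B) and then builds the graph as the split graph of that hypergraph (clique on the hypergraph vertices, one independent vertex per hyperedge attached to its elements). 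So the corresponding hypergraph of his counterexample is by construction not $2$-colorable; no new structural analysis is needed, only the citation of that defining property.

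The second thing missing from your plan is the reduction from panchromatic $k$-colorability to $2$-colorability, which is what lets one conclude for \emph{all} $k\geq 2$ at once instead of excluding every $k$-partition separately, as you propose. If $H(G)$ had a panchromatic $k$-coloring with $k\geq 2$, merging color $1$ into one class and colors $2,\dots,k$ into the other would give a $2$-coloring in which every hyperedge meets both classes, i.e.\ a proper $2$-coloring --- contradicting the non-$2$-colorability built into \cite{4}. With that one line, the contrapositive of Theorem~\ref{thm1} immediately gives the nonexistence of $k$ CIST, which is all the observation asserts. As written, your proposal defers the core fact to an unspecified future argument and attacks it by a harder route than necessary, so it does not yet constitute a complete justification.
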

However, the converse is not always true. Figure \ref{ref1} shows a split graph that does not admit three completely independent trees (this can be verified using a linear program \cite{34}), yet its corresponding hypergraph is panchromatically $3$-colorable. In the following proposition, additional conditions are added to get the converse. 

\begin{thm}
    Let $k \geq 2$. Let $G$ be a split graph. If $H(G)$ is bipanchromatically $k$-colorable, then $G$ has $k$ completely independent spanning trees.
    \label{lemme1}
\end{thm}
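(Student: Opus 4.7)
The plan is to invoke Araki's criterion (Theorem~\ref{thm2}) and build an explicit $k$-CIST-partition of $V(G)=D\cup I$ directly from a bipanchromatic $k$-coloring $\varphi$ of $H(G)$. Write $D_i$ for the class of color $i$ in $\varphi$: bipanchromaticity gives $|D_i|\ge 2$ for every $i$, and panchromaticity gives $e_x\cap D_i\ne\emptyset$ for every $x\in I$ and every $i$. Extend $\varphi$ to a partition $V_1\cup\cdots\cup V_k$ of $V(G)$ by setting $V_i\cap D = D_i$ and assigning every $x\in I$ to some class $V_{c(x)}$ with $c(x)$ any color represented in $e_x$ (for definiteness one can take $c(x)=1$ for all $x\in I$).

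First I would check that $G[V_i]$ is connected. Since $D_i\subseteq D$ and $|D_i|\ge 2$, it induces a clique in $G$; and each $x\in V_i\cap I$ has at least one neighbor in $D_i$ by panchromaticity of $e_x$, so $x$ hangs off that clique. Hence $G[V_i]$ is connected for every $i$.

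The heart of the argument is to show that for every $1\le i<j\le k$ the bipartite graph $B=B(V_i,V_j,G)$ has no tree component. Inside the clique $D$, the sets $D_i$ and $D_j$ induce a complete bipartite graph $K_{a,b}$ with $a=|D_i|\ge 2$ and $b=|D_j|\ge 2$; this is connected and already contains a $4$-cycle. Every $x\in (V_i\cup V_j)\cap I$ attaches to this core by at least one edge (again by panchromaticity of $e_x$, applied to the color opposite to $x$'s), so $B$ is connected. Counting gives $|E(B)|\ge ab+|V_i\cap I|+|V_j\cap I|$ while $|V(B)|=a+b+|V_i\cap I|+|V_j\cap I|$, so
\[
|E(B)|-|V(B)|\;\ge\; ab-(a+b)\;=\;(a-1)(b-1)-1\;\ge\;0,
\]
since $a,b\ge 2$. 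Thus $B$ is a single component with at least as many edges as vertices, hence not a tree, and Theorem~\ref{thm2} yields the desired $k$ CIST.

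I expect the pivotal point of the proof to be precisely this edge count: it is the step that uses $|D_i|\ge 2$ for \emph{every} color $i$. If some color appeared only once, $D_i\cup D_j$ would degenerate to a star or a single edge, and a pendant $I$-vertex attached to it would leave a tree component in $B(V_i,V_j,G)$, breaking Araki's criterion. The bipanchromatic hypothesis is tailored to rule out this failure mode uniformly across all color pairs, which is exactly what makes the converse of Theorem~\ref{thm1} go through under this strengthening.
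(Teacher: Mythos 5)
Your proof is correct, and it builds the same partition as the paper does (the color classes of $\varphi$ on $D$, with the vertices of $I$ distributed arbitrarily, then an appeal to Araki's criterion, Theorem~\ref{thm2}). The difference lies in how the CIST-partition conditions are verified. The paper first argues that the color classes form a $k$-CIST-partition of the clique $G[D]$ by invoking the result of Pai \emph{et al.} that $k$ CIST of $K_n$ can be built with only two internal vertices per tree, and then extends to $I$ with a rather terse remark that each $I$-vertex ``is only adjacent to a component that is not a tree component.'' You instead verify Araki's two conditions directly and self-containedly: connectivity of each $G[V_i]$ from the clique $D_i$ plus panchromaticity, and the ``no tree component'' condition via the explicit count on $B(V_i,V_j,G)$, namely a complete bipartite core $K_{a,b}$ with $a,b\ge 2$ to which every $I$-vertex of $V_i\cup V_j$ attaches by at least one edge, giving a single connected component with
\[
|E(B)|-|V(B)|\;\ge\;ab-(a+b)\;=\;(a-1)(b-1)-1\;\ge\;0 .
\]
This buys you a proof that needs no external CIST construction for complete graphs and that makes transparent exactly where the bipanchromatic hypothesis ($|D_i|\ge 2$ for all $i$) is used, whereas the paper's route is shorter on paper but leaves the tree-component check implicit. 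One small remark: since Araki's condition is stated per connected component, it is worth saying explicitly (as you essentially do) that $B$ is connected because every $I$-vertex attaches to the connected core $K_{a,b}$; otherwise you would have to run the same count on each component separately.
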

\begin{proof}
    Let $G = (D \cup I,E)$ be a split graph. Let us begin with the construction of a $k$-CIST-partition for the subgraph induced by $D$: Each vertex of color $i$ for $i = 1,2,\dots,k$ is assigned to the set of internal vertices of the tree $T_i$ denoted $V_i$. From the result of Pai \textit{et al.} \cite{3}, only two vertices in $K_n$ suffice to construct each tree for each $n \geq 4$. As $H(G)$ is bipanchromatically $k$-colorable, the number of internal vertices per tree is at least two. Since $T_1,T_2,\dots,T_k$ are CIST in $D$, by Theorem \ref{thm2}, the partition $V_1 \cup V_2 \cup \dots \cup V_k$ is $k$-CIST partition in $D$. Now, let us extend the previous partition to the vertices of $I$: Since every vertex of $I$ in $G$ is covered by an internal vertex of each tree $T_i$, then the constructed trees are spanning. Moreover, if each vertex of $I$ is assigned arbitrarily to one of the previous sets, then the new sets,  denoted $V'_i$, have the following properties: $G[V'_i]$ is connected for each $i = 1,2,\dots,k$ because the constructed trees are spanning, and for each $1 \le i < j \le k$, $B(V'_i,V'_j,G)$ has no tree component because each element of $I$ is only adjacent to a component that is not a tree component. Thus, $V'_1,V'_2,\dots,V'_k$ form a CIST-partition in $G$.
\end{proof}

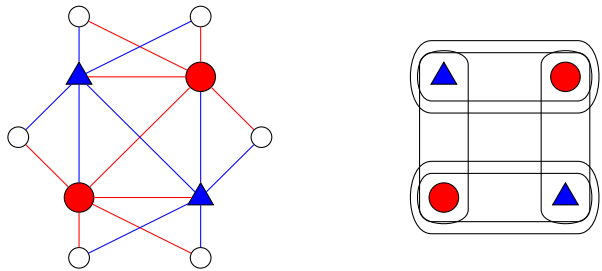
\begin{figure}[ht]
    \begin{center}
    \begin{tikzpicture}[scale=0.8]
        \node[shape=circle,draw=black,fill=red] (1) at (0,0) {};
        \node[regular polygon, regular polygon sides=3,draw=black,fill=blue,scale=0.5] (2) at (2,0) {};
        \node[shape=circle,draw=black,fill=red] (3) at (2,2) {};
        \node[regular polygon, regular polygon sides=3,draw=black,fill=blue,scale=0.5] (4) at (0,2) {};
        \node[shape=circle,draw=black,scale=0.7] (5) at (0,3) {};
        \node[shape=circle,draw=black,scale=0.7] (6) at (2,3) {};
        \node[shape=circle,draw=black,scale=0.7] (7) at (2,-1) {};
        \node[shape=circle,draw=black,scale=0.7] (8) at (0,-1) {};
        \node[shape=circle,draw=black,scale=0.7] (10) at (-1,1) {};
        \node[shape=circle,draw=black,scale=0.7] (11) at (3,1) {};
        \path [-,red] (1) edge node {} (3);    
        \path [-,red] (1) edge node {} (2);    
        \path [-,blue] (1) edge node {} (4);     
        \path [-,blue] (2) edge node {} (3);    
        \path [-,blue] (2) edge node {} (4);
        \path [-,red] (3) edge node {} (4);    
        \path [-,red] (5) edge node {} (3);
        \path [-,blue] (5) edge node {} (4);
        \path [-,red] (6) edge node {} (3);    
        \path [-,blue] (6) edge node {} (4);    
        \path [-,red] (7) edge node {} (1);    
        \path [-,blue] (7) edge node {} (2);    
        \path [-,red] (8) edge node {} (1);
        \path [-,blue] (8) edge node {} (2);
        \path [-,red] (10) edge node {} (1);
        \path [-,blue] (10) edge node {} (4);  
        \path [-,blue] (11) edge node {} (2);
        \path [-,red] (11) edge node {} (3);    
        \node[shape=circle,draw=black,fill=red] (01) at (6,0) {};
        \node[regular polygon, regular polygon sides=3,draw=black,fill=blue,scale=0.5] (02) at (8,0) {};
        \node[shape=circle,draw=black,fill=red] (03) at (8,2) {};
        \node[regular polygon, regular polygon sides=3,draw=black,fill=blue,scale=0.5] (04) at (6,2) {};
        \begin{scope}[fill opacity=0]
        \filldraw[fill=yellow!70] ($(03)+(0.4,0.2)$)
            to[out=-90,in=90] ($(02) + (0.4,-0.2)$)
            to[out=-90,in=-90] ($(02) + (-0.4,-0.2)$) 
            to[out=90,in=-90] ($(03) + (-0.4,0.2)$)  
            to[out=90,in=90] ($(03) + (0.4,0.2)$)   ;
        \end{scope}
        \begin{scope}[fill opacity=0]
        \filldraw[fill=yellow!70] ($(04)+(0.4,0.2)$)
            to[out=-90,in=90] ($(01) + (0.4,-0.2)$)
            to[out=-90,in=-90] ($(01) + (-0.4,-0.2)$) 
            to[out=90,in=-90] ($(04) + (-0.4,0.2)$)  
            to[out=90,in=90] ($(04) + (0.4,0.2)$)   ;
        \end{scope}    
        \begin{scope}[fill opacity=0]
        \filldraw[fill=yellow!70] ($(04)+(-0.2,0.4)$)
            to[out=0,in=180] ($(03) + (0.2,0.4)$)
            to[out=0,in=0] ($(03) + (0.2,-0.4)$) 
            to[out=180,in=0] ($(04) + (-0.2,-0.4)$)  
            to[out=180,in=180] ($(04) + (-0.2,0.4)$)   ;
        \end{scope}   
        \begin{scope}[fill opacity=0]
        \filldraw[fill=yellow!70] ($(01)+(-0.2,0.4)$)
            to[out=0,in=180] ($(02) + (0.2,0.4)$)
            to[out=0,in=0] ($(02) + (0.2,-0.4)$) 
            to[out=180,in=0] ($(01) + (-0.2,-0.4)$)  
            to[out=180,in=180] ($(01) + (-0.2,0.4)$)   ;
        \end{scope}   
        \begin{scope}[fill opacity=0]
        \filldraw[fill=yellow!70] ($(04)+(-0.2,0.6)$)
            to[out=0,in=180] ($(03) + (0.2,0.6)$)
            to[out=0,in=0] ($(03) + (0.2,-0.6)$) 
            to[out=180,in=0] ($(04) + (-0.2,-0.6)$)  
            to[out=180,in=180] ($(04) + (-0.2,0.6)$)   ;
        \end{scope}   
        \begin{scope}[fill opacity=0]
        \filldraw[fill=yellow!70] ($(01)+(-0.2,0.6)$)
            to[out=0,in=180] ($(02) + (0.2,0.6)$)
            to[out=0,in=0] ($(02) + (0.2,-0.6)$) 
            to[out=180,in=0] ($(01) + (-0.2,-0.6)$)  
            to[out=180,in=180] ($(01) + (-0.2,0.6)$)   ;
        \end{scope}   
    \end{tikzpicture}
        \caption{A split graph having $2$ CIST while not satisfying the conditions of Theorem \ref{thm3}}
        \label{figurenv}
    \end{center}
\end{figure}

\noindent Theorem \ref{lemme1} provides new results that were not implied by Theorem \ref{thm3}. In fact, it shows that even some graphs that are not Hamiltonian and do not satisfy $|D| > \max \{ 3,|I| \}$ can have two CIST (see Figure~\ref{figurenv}). However, Theorem \ref{lemme1} relies on the existence of a bipanchromatic $k$-coloring, which is not always necessary. Figure \ref{ref2} shows a graph with two completely independent spanning trees, yet its corresponding hypergraph is not bipanchromatically $2$-colorable.
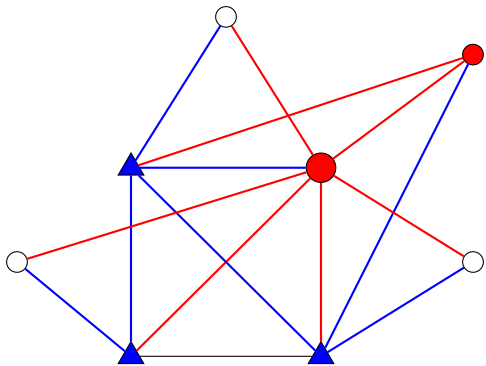
\begin{figure}[ht]
    \begin{center}
    \begin{tikzpicture}
        \node[regular polygon, regular polygon sides=3,draw=black,fill=blue,scale=0.5] (1) at (0,0) {};
        \node[regular polygon, regular polygon sides=3,draw=black,fill=blue,scale=0.5] (2) at (2.5,0) {};
        \node[shape=circle,draw=black,fill=red] (3) at (2.5,2.5) {};
        \node[regular polygon, regular polygon sides=3,draw=black,fill=blue,scale=0.5] (4) at (0,2.5) {};
        \node[shape=circle,draw=black,fill=red,scale=0.7] (8) at (4.5,4) {};
        \node[shape=circle,draw=black,scale=0.7] (10) at (-1.5,1.25) {};
        \node[shape=circle,draw=black,scale=0.7] (11) at (4.5,1.25) {};
        \node[shape=circle,draw=black,scale=0.7] (12) at (1.25,4.5) {};
        \path [-,red,thick] (1) edge node {} (3);    
        \path [-] (1) edge node {} (2);    
        \path [-,blue,thick] (1) edge node {} (4);     
        \path [-,red,thick] (2) edge node {} (3);    
        \path [-,blue,thick] (2) edge node {} (4);
        \path [-,blue,thick] (3) edge node {} (4);
        \path [-,red,thick] (8) edge node {} (3);
        \path [-,red,thick] (8) edge node {} (4);
        \path [-,blue,thick] (8) edge node {} (2);
        \path [-,blue,thick] (1) edge node {} (10);    
        \path [-,red,thick] (10) edge node {} (3);    
        \path [-,red,thick] (11) edge node {} (3);    
        \path [-,blue,thick] (11) edge node {} (2);      
        \path [-,red,thick] (12) edge node {} (3);    
        \path [-,blue,thick] (12) edge node {} (4);    
    \end{tikzpicture}
    \caption{A split graph with $2$ CIST whose corresponding hypergraph is not bipanchromatically $2$-colorable}
    \label{ref2}
    \end{center}
\end{figure}

In what follows, some results are given for the case where, given $k\ge 2$, at least one vertex of unique color exists in any panchromatic $k$-coloring of $H$. To this end, we let  $\alpha_k(H(G))$ be the minimum number of vertices of unique color among all panchromatic $k$-colorings of $H(G)$. 
\subsection{Properties related to unique colors}
\begin{lemme}
    Let $k \geq 2$. Let $G=(D\cup I,E)$ be a split graph. Let $x_D$ be a vertex of $D$ and $\varphi$ be a panchromatic $k$-coloring of $H(G)$. If the color of $x_D$ is unique under $\varphi$, then $x_D$ is adjacent to all vertices of $I$.
    \label{lemme2}
\end{lemme}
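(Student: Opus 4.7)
The plan is to argue by contraposition (equivalently, contradiction): assume that $x_D$ is \emph{not} adjacent to every vertex of $I$, and derive that the color of $x_D$ cannot be unique under any panchromatic $k$-coloring $\varphi$.

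Concretely, I would first pick some $y \in I$ with $x_D \notin N_G(y)$, which exists by the failure of the adjacency conclusion. Translating into the hypergraph $H(G) = (D,\mathcal{E})$, this means that the hyperedge $e_y = N_G(y)$ does not contain $x_D$ as one of its vertices. Next, I would invoke the definition of a panchromatic $k$-coloring: every hyperedge must receive all $k$ colors, so in particular $e_y$ contains at least one vertex $z \in D$ with $\varphi(z) = \varphi(x_D)$. Since $z \in e_y \subseteq D \setminus \{x_D\}$, we have $z \neq x_D$, so the color $\varphi(x_D)$ is used by at least two distinct vertices of $D$, contradicting the assumption that it is unique.

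The argument is essentially a one-line unpacking of definitions, so I do not expect any real obstacle: the only subtlety is bookkeeping the translation between "$x_D$ adjacent to $y$ in $G$" and "$x_D \in e_y$ in $H(G)$," which is immediate from the definition $e_y = N_G(y)$ given in Section~\ref{sec2}. Note that the assumption $y \in I$ (not $y \in D$) is essential, since hyperedges correspond exactly to the neighborhoods of independent-set vertices; no further structural hypothesis on $G$ is needed. The lemma can thus be stated and proved independently of the value of $k \geq 2$.
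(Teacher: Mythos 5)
Your proposal is correct and follows essentially the same argument as the paper: pick $y\in I$ not adjacent to $x_D$, note $x_D\notin e_y$, and derive a contradiction between panchromaticity of $\varphi$ and the uniqueness of the color of $x_D$. The only difference is cosmetic (you phrase the contradiction as a second vertex sharing the color, the paper as $e_y$ missing the color), so nothing further is needed.
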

\begin{proof}
    Suppose that the color of $x_D$ is unique and that $x_D$ is not adjacent to all the vertices of $I$. Therefore, there exists $y \in I$ that is not adjacent to $x_D$. Then, $e_y$ does not contain the color of $x_D$. Thus, $\varphi$ is not panchromatic, a contradiction.
\end{proof}
    
\noindent However, Figure \ref{figcontre} shows that the converse is not always true. But, it holds if we restrict to split graphs such that $H(G)$ is $k$-uniform (\textit{i.e.}, all vertices of $I$ have a degree equal to $k$).

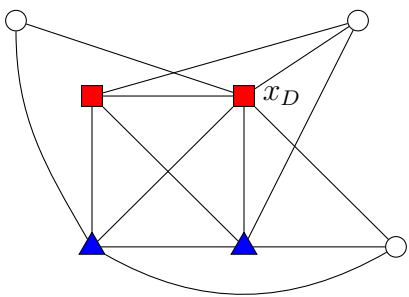
\begin{figure}[ht]
    \begin{center}
    \begin{tikzpicture}
        \node[regular polygon, regular polygon sides=3,draw=black,fill=blue,scale=0.5] (1) at (1,1) {};
        \node[regular polygon, regular polygon sides=3,draw=black,fill=blue,scale=0.5] (2) at (3,1) {};
        \node[shape=rectangle,draw=black,fill=red] (3) at (3,3) {};
        \node[shape=rectangle,draw=black,fill=red] (4) at (1,3) {};
        \node[shape=circle,draw=black,scale=0.7] (7) at (0,4) {};
        \node[shape=circle,draw=black,scale=0.7] (8) at (4.5,4) {};
        \node[shape=circle,draw=black,scale=0.7] (9) at (5,1) {};
        \path [-] (1) edge node {} (3);    
        \path [-] (1) edge node {} (2);    
        \path [-] (1) edge node {} (4);     
        \path [-] (2) edge node {} (3);    
        \path [-] (2) edge node {} (4);
        \path [-] (3) edge node {} (4);
        \path [-] (7) edge node {} (3);    
        \path [-] (8) edge node {} (3);
        \path [-] (8) edge node {} (4);
        \path [-] (8) edge node {} (2);
        \path [-] (9) edge node {} (2);
        \path [-] (9) edge node {} (3);
        \draw[-] (1) to[out=120, in= -90] (7); 
        \draw[-] (9) to[out=-150, in= -30] (1); 
        \node at (3.5,3) {$x_D$};
    \end{tikzpicture}
    \caption{A split graph $G$ where $x_D$ is adjacent to all vertices of $I$ but its color is not unique in some of the panchromatic $2$-colorings of $H(G)$}
    \label{figcontre}
    \end{center}
\end{figure}
    
\begin{lemme}
    Let $k \geq 2$. Let $G=(D\cup I,E)$ be a split graph such that $H(G)$ is $k$-uniform. Let $x_D$ be a vertex of $D$ and $\varphi$ be a panchromatic $k$-coloring of $H(G)$. If $x_D$ is adjacent to all vertices of $I$, then the color of $x_D$ is unique under $\varphi$.
    \label{lemme2a}
\end{lemme}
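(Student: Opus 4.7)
The plan is to combine the $k$-uniform hypothesis with the panchromatic hypothesis to deduce that each color appears \emph{exactly} once in every hyperedge, and then exploit the fact that $x_D$ lies in every hyperedge together with the standing assumption that every vertex of $D$ is incident to at least one hyperedge.

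More precisely, I would first observe a pigeonhole-style rigidity: since each hyperedge $e_x$ (with $x\in I$) has exactly $k$ vertices and the panchromatic condition forces all $k$ colors to appear in $e_x$, each color is represented in $e_x$ by a single vertex. In particular, if $c$ denotes the color of $x_D$ under $\varphi$, then any hyperedge containing $x_D$ contains no other vertex of color $c$.

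Next I would translate the assumption that $x_D$ is adjacent to every vertex of $I$ into hypergraph language: $x_D\in e_x$ for every $x\in I$. To derive uniqueness, suppose for contradiction that some $y_D\in D\setminus\{x_D\}$ also has color $c$ under $\varphi$. By the standing assumption stated just after the definition of the corresponding hypergraph, every vertex of $D$ belongs to at least one hyperedge, so there exists $z\in I$ with $y_D\in e_z$. Since $x_D$ is adjacent to all of $I$, we also have $x_D\in e_z$. Thus $e_z$ contains two distinct vertices of color $c$, contradicting the previous paragraph.

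This argument is essentially forced once the two hypotheses are unpacked, so I do not expect any serious obstacle; the only subtlety worth flagging is the use of the paper's implicit assumption that no vertex of $D$ is isolated from $I$ (without which a second vertex of color $c$ could sit in $D$ unseen by any hyperedge, and the statement would fail).
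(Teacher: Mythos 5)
Your proof is correct and follows essentially the same route as the paper: $k$-uniformity plus the panchromatic condition force each hyperedge to be rainbow, and since $x_D$ lies in every hyperedge while every other vertex of $D$ lies in at least one, no other vertex can share its color. You merely make explicit (via the contradiction with $y_D$ and $e_z$) the standing no-isolated-$D$-vertex assumption that the paper's one-line proof uses implicitly, which is a reasonable clarification but not a different argument.
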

\begin{proof}
    Since the hypergraph $H(G)$ is $k$-uniform and panchromatically $k$-colorable, each hyperedge in $H(G)$ contains exactly $k$ vertices that have different colors. It follows that the vertex $x_D$ belongs to all the hyperedges and does not have the same color as the other vertices.
\end{proof}

\subsection{Relation between CIST and unique colors}
\begin{prop}
    Let $k \geq 2$. Let $G = (D \cup I,E)$ be a split graph such that its corresponding hypergraph $H(G)$ is $k$-uniform. If there exists $x_D \in D$ such that $I \subset N_G(x_D)$, then $G$ does not have $k$ completely independent spanning trees.
    \label{lemme3}
\end{prop}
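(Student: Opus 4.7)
The plan is to argue by contradiction, chaining Theorems~\ref{thm1} and~\ref{thm2} with Lemma~\ref{lemme2a}. Suppose $G$ has $k$ CIST. Theorem~\ref{thm2} then produces a $k$-CIST-partition $V_1,\dots,V_k$ of $V(G)$, and the construction in the proof of Theorem~\ref{thm1} converts it into a panchromatic $k$-coloring $\varphi$ of $H(G)$ by assigning color $i$ to the vertices of $V_i\cap D$. Since $H(G)$ is $k$-uniform and $x_D$ is adjacent to every vertex of $I$, Lemma~\ref{lemme2a} tells us that the color $i$ of $x_D$ is unique in $\varphi$, so $V_i\cap D=\{x_D\}$.

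I would then record a second consequence of combining $k$-uniformity with panchromaticity: each hyperedge $e_y$ has exactly $k$ vertices and contains all $k$ colors, so it must carry each color exactly once. In particular, for every $y\in I$ and every $j\in\{1,\dots,k\}$, the vertex $y$ has exactly one neighbor in $V_j\cap D$. A side consequence I would note up front is that $V_j\cap D\neq\emptyset$ for every $j$ (otherwise color $j$ would not appear in any hyperedge and $\varphi$ could not be panchromatic), so no part of the CIST-partition is entirely contained in $I$.

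The heart of the argument is to show that, for any $j\neq i$, the bipartite graph $B(V_i,V_j,G)$ is itself a tree, directly contradicting the CIST-partition condition. The vertex set has size $|V_i|+|V_j|$. The edges split into three contributions: (i) $x_D$ is adjacent to all of $V_j$ (clique edges to $V_j\cap D$ and universal-to-$I$ edges to $V_j\cap I$), yielding $|V_j|$ edges; (ii) every $u\in V_i\cap I$ contributes exactly one edge into $V_j\cap D$ by the structural fact above, yielding $|V_i\cap I|=|V_i|-1$ edges; (iii) there are no $V_i\cap I$--$V_j\cap I$ edges because $I$ is independent. The total is $|V_i|+|V_j|-1$, and since $B(V_i,V_j,G)$ is clearly connected through $x_D$, it is a tree.

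The main point to get right is the edge count in (ii), which relies crucially on $k$-uniformity: without that hypothesis a vertex of $V_i\cap I$ could have two or more neighbors in $V_j\cap D$, adding enough edges to create a cycle and destroy the tree structure. Ensuring that the total lands exactly at $|V_i|+|V_j|-1$ is therefore the delicate calculation, and I would double-check it before concluding the contradiction with Theorem~\ref{thm2}.
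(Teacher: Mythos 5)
Your proof is correct, but it takes a genuinely different route from the paper's. The paper argues directly on the trees: since $H(G)$ is $k$-uniform, every $y\in I$ has degree exactly $k$ and is therefore a leaf in each of the $k$ edge-disjoint trees; by Lemma~\ref{lemme2a} the tree $T_1$ containing $x_D$ as an internal vertex has $x_D$ as its \emph{only} internal vertex, so $T_1$ must be a star centered at $x_D$, and then $x_D$ cannot be covered in the remaining $k-1$ trees (its edges to $D$ are all consumed by $T_1$, and vertices of $I$ are leaves everywhere), so the family cannot consist of spanning trees. You instead pass through Araki's characterization (Theorem~\ref{thm2}): after extracting the CIST-partition and the induced panchromatic coloring, you use $k$-uniformity to pin down that each $u\in V_i\cap I$ has exactly one neighbour of each colour, and then the edge count $|E(B(V_i,V_j,G))|=|V_j|+|V_i\cap I|=|V_i|+|V_j|-1$ shows $B(V_i,V_j,G)$ is a tree (indeed, even without checking connectivity, $|E|<|V|$ already forces a tree component), contradicting the no-tree-component condition. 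Your counting argument is arguably tighter and more self-contained than the paper's somewhat informal covering discussion, since the contradiction drops out of a single arithmetic identity plus the definition of a CIST-partition; the paper's version, on the other hand, gives a more concrete structural picture (the forced star $T_1$) of why the configuration fails. The one step worth stating explicitly in your write-up is the justification that the coloring inherited from the partition is panchromatic on $H(G)$ (so that Lemma~\ref{lemme2a} and the exactly-one-neighbour-per-colour claim apply); this is exactly the content of the construction in Theorem~\ref{thm1}, which you correctly invoke.
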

\begin{proof}
    Suppose that $G$ has $k$ completely independent spanning trees. Each vertex $y \in I$ in $G$ is adjacent to $k$ vertices of $D$, including $x_D$. Then, $y$ is a leaf for all CIST of $G$. Without loss of generality, suppose $x_D$ is an internal vertex of the spanning tree $T_1$. By Lemma \ref{lemme2a}, $x_D$ is a vertex of unique color, thus it needs to cover all the vertices of $I$ in $T_1$. However, $x_D$ cannot cover all vertices of $D$ in $T_1$ because it has to be covered by an internal vertex of each of the $k-1$ other CIST. If $x_D$ is covered by $k-1$ internal vertices, each one corresponding to a spanning tree, then it cannot cover them too. It cannot also be covered by the vertices of $I$ in any tree because they are leaves. Therefore, $T_1$ is not a spanning tree, a contradiction (see Figure~\ref{figcontre1}).
\end{proof}

\begin{figure}[ht]
    \begin{center}
    \begin{tikzpicture}
        \node[shape=circle,draw=black,fill=blue] (0) at (1,-1.2) {};
        \node[shape=circle,draw=black,fill=red] (1) at (2,0) {};
        \node[shape=circle,draw=black,fill=green] (2) at (3.5,-0.7) {};
        \node[shape=circle,draw=black,fill=blue] (3) at (4.6,-2) {};
        \node[shape=circle,draw=black,fill=green] (4) at (3.7,-3) {};
        \node[shape=circle,draw=black,scale=0.7] (5) at (5,0) {};
        \node[shape=circle,draw=black,scale=0.7] (6) at (3.8,1.6) {};
        \path [-,thick,red] (1) edge node {} (3);    
        \path [-,thick,red] (1) edge node {} (2);    
        \path [-,thick,green] (1) edge node {} (4);     
        \path [-,thick,blue] (2) edge node {} (3);    
        \path [-,thick,green] (2) edge node {} (4);
        \path [-,thick,green] (3) edge node {} (4);
        \path [-,thick,red] (1) edge node {} (5);     
        \path [-,thick,green] (2) edge node {} (5);    
        \path [-,thick,blue] (3) edge node {} (5);
        \node at (1.55,0.25) {$x_D$};
        \node[] at (5.5,2) {$T_1$};
        \path [-,thick,red] (6,2) edge node {} (6.5,2);    
        \path [-,thick,blue] (1) edge node {} (0);     
        \path [-,thick,green] (2) edge node {} (0);    
        \path [-,thick,blue] (3) edge node {} (0);
        \path [-,thick,blue] (4) edge node {} (0);
        \path [-,thick,red] (1) edge node {} (6);    
        \path [-,thick,green] (2) edge node {} (6);
        \path [-,thick,blue] (3) edge node {} (6);
        \node[scale=1.3] at (1.75,-2.5) {$\ddots$};
        \path [-] (2.4,-3) edge node {} (4);    
        \path [-] (0) edge node {} (1.2,-2.3);    
    \end{tikzpicture}
    \end{center}
    \caption{Configuration of proof of Proposition \ref{lemme3} for $k = 3$ that leads to a contradiction}
    \label{figcontre1}
\end{figure}
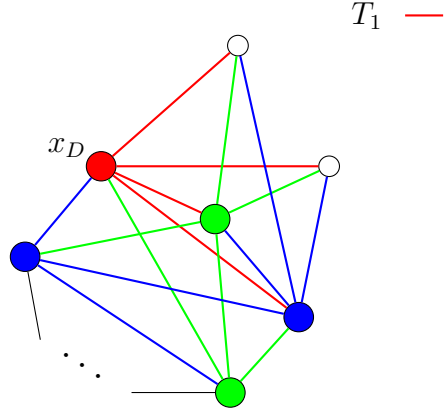
By Lemma \ref{lemme2a} and Proposition \ref{lemme2}, we have:
\begin{corollary}
    Let $G = (D \cup I,E)$ be a split graph such that its corresponding hypergraph $H(G)$ is $k$-uniform and panchromatically $k$-colorable. If $\alpha_k(H(G)) \geq 1$, then $G$ does not have $k$ completely independent spanning trees.
\end{corollary}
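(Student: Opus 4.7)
The plan is to reduce the corollary directly to Proposition \ref{lemme3} by exhibiting a vertex $x_D \in D$ with $N_G(x_D) = I$. Since $H(G)$ is assumed to be panchromatically $k$-colorable, at least one panchromatic $k$-coloring $\varphi$ of $H(G)$ exists, so the quantity $\alpha_k(H(G))$ is well defined. The hypothesis $\alpha_k(H(G)) \geq 1$ then means that every panchromatic $k$-coloring of $H(G)$, and in particular $\varphi$, contains at least one vertex $x_D$ whose color is unique under $\varphi$.

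Next I would apply Lemma \ref{lemme2} to this unique-color vertex $x_D$: it forces $x_D$ to be adjacent to every vertex of $I$, i.e., $N_G(x_D) = I$. At this stage, the hypotheses of Proposition \ref{lemme3} are fully met (the $k$-uniformity of $H(G)$ is part of the corollary's hypothesis, and $x_D$ is the required dominating vertex of $D$), so Proposition \ref{lemme3} yields directly that $G$ does not admit $k$ completely independent spanning trees.

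There is essentially no technical obstacle: the proof is a three-line chaining of earlier results. The only points worth double-checking are, first, that the panchromatic $k$-colorability hypothesis is genuinely needed (without it, $\alpha_k(H(G)) \geq 1$ would be vacuous), and second, that $k$-uniformity of $H(G)$ is preserved from the corollary's statement into the application of Proposition \ref{lemme3}. Lemma \ref{lemme2a} itself is not required for the implication to go through, but it complements the argument by showing that in the $k$-uniform setting the unique-color vertices are exactly the vertices of $D$ adjacent to all of $I$, which makes the hypothesis $\alpha_k(H(G)) \geq 1$ combinatorially transparent on the split graph side.
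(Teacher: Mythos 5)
Your proof is correct and is essentially the paper's own derivation: the paper states the corollary as an immediate consequence of chaining Lemma \ref{lemme2} (a unique-color vertex must be adjacent to all of $I$) with Proposition \ref{lemme3}, exactly as you do, with the panchromatic $k$-colorability hypothesis ensuring a coloring exists so that $\alpha_k(H(G)) \geq 1$ produces the required vertex $x_D$. Your remark that Lemma \ref{lemme2a} is not needed for this implication (it is only used inside the proof of Proposition \ref{lemme3}) is also accurate.
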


\noindent The $k$-uniformity of the hypergraph forbids the vertex $x_D$ to cover all vertices of $D$ through the vertices of $I$. However, Figure \ref{contr1} shows that the non-uniformity alone is not sufficient to have $k$ completely independent trees in a graph for which the associated hypergraph contains a $k$-uniform subhypergraph (the non-existence of $3$ CIST in this graph can be verified using a linear program \cite{34}). The following proposition shows that it is sufficient with an additional condition.

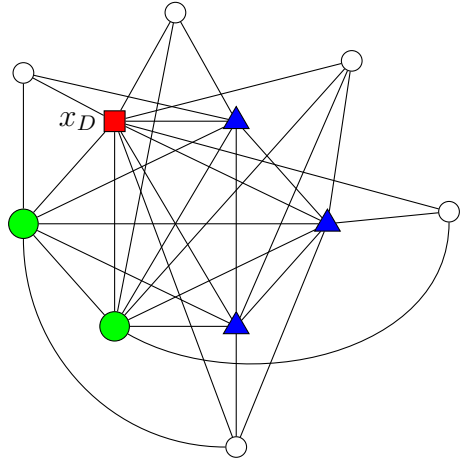
\begin{figure}[ht]
    \begin{center}
    \begin{tikzpicture}[scale=0.8]
        \node[shape=circle,draw=black,fill=green] (1) at (-0.5,0.5) {};
        \node[shape=circle,draw=black,fill=green] (2) at (1,-1.2) {};
        \node[regular polygon, regular polygon sides=3,draw=black,fill=blue,scale=0.5] (3) at (3,-1.2) {};
        \node[regular polygon, regular polygon sides=3,draw=black,fill=blue,scale=0.5] (4) at (4.5,0.5) {};
        \node[regular polygon, regular polygon sides=3,draw=black,fill=blue,scale=0.5] (6) at (3,2.2) {};
        \node[shape=rectangle,draw=black,fill=red] (5) at (1,2.2) {};
        \node[shape=circle,draw=black,scale=0.7] (7) at (4.9,3.2) {};
        \node[shape=circle,draw=black,scale=0.7] (8) at (6.5,0.7) {};
        \node[shape=circle,draw=black,scale=0.7] (9) at (3,-3.2) {};
        \node[shape=circle,draw=black,scale=0.7] (10) at (2,4) {};
        \node[shape=circle,draw=black,scale=0.7] (11) at (-0.5,3) {};
        \path [-] (1) edge node {} (4);
        \path [-] (1) edge node {} (2);
        \path [-] (1) edge node {} (6);
        \path [-] (4) edge node {} (3);
        \path [-] (4) edge node {} (5);
        \path [-] (1) edge node {} (3);
        \path [-] (1) edge node {} (5);
        \path [-] (5) edge node {} (3);    
        \path [-] (5) edge node {} (2);    
        \path [-] (9) edge node {} (5);    
        \path [-] (6) edge node {} (2);     
        \path [-] (6) edge node {} (3);    
        \path [-] (2) edge node {} (3);
        \path [-] (5) edge node {} (6);
        \path [-] (6) edge node {} (4);    
        \path [-] (4) edge node {} (2);    
        \path [-] (7) edge node {} (5);
        \path [-] (7) edge node {} (4);    
        \path [-] (7) edge node {} (3);    
        \path [-] (8) edge node {} (5);
        \path [-] (8) edge node {} (4);    
        \path [-] (9) edge node {} (4);    
        \path [-] (9) edge node {} (3);
        \draw[-] (9) to[out=180, in= -90] (1); 
        \draw[-] (8) to[out=-90, in= -30] (2); 
        \path [-] (7) edge node {} (2);
        \path [-] (5) edge node {} (10);        
        \path [-] (6) edge node {} (10);        
        \path [-] (2) edge node {} (10);
        \path [-] (1) edge node {} (11);        
        \path [-] (5) edge node {} (11);        
        \path [-] (6) edge node {} (11);
        \node at (0.4,2.2) {$x_D$};
    \end{tikzpicture}
    \caption{A split graph without $3$ CIST whose corresponding hypergraph is not $3$-uniform 
    }
    \label{contr1}
    \end{center}
\end{figure}
    
\begin{prop}
    Let $G = (D \cup I,E)$ be a split graph such that its corresponding hypergraph $H(G)$ is not $k$-uniform. If there exists a panchromatic $k$-coloring $\varphi$ of $H(G)$ such that there is exactly one vertex of unique color $c_j$, and for each color $c_i \neq c_j $, there exists a vertex $y \in I$ such that $d(y) > k$ and $y$ is adjacent to at least two vertices of color $c_i$, then $G$ has $k$ completely independent spanning trees.      
    \label{lemme5}
\end{prop}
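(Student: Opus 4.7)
The plan is to construct, from the panchromatic coloring $\varphi$ and the special vertices $y_i$, a $k$-CIST-partition of $G$ and then conclude by Theorem~\ref{thm2}. Let $x_D$ denote the unique-color vertex, so $\varphi(x_D) = c_j$. I would first set $V_\ell = \{v \in D : \varphi(v) = c_\ell\}$ for $\ell = 1, \ldots, k$, so that $V_j = \{x_D\}$ while $|V_\ell| \geq 2$ for $\ell \neq j$ (since $c_j$ is the only unique color). Then I would define $V'_j = \{x_D\} \cup \{y_i : i \neq j\}$, moving each special vertex into the part containing $x_D$, and set $V'_\ell = V_\ell$ for $\ell \neq j$. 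The remaining vertices of $I$ can be distributed arbitrarily among the parts.

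To verify the two conditions of Theorem~\ref{thm2}, connectivity of $G[V'_\ell]$ is handled as follows: for $\ell \neq j$, the clique $V_\ell$ of size at least $2$ is contained in $V'_\ell$ and any $y \in V'_\ell \cap I$ is adjacent to some vertex of $V_\ell$ by panchromaticity of $\varphi$; for $\ell = j$, Lemma~\ref{lemme2} ensures that $x_D$ is adjacent to every vertex of $I$, so $G[V'_j]$ is a star centered at $x_D$.

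For the absence of tree components in $B(V'_{\ell_1}, V'_{\ell_2}, G)$, I would split into two cases. When $j \notin \{\ell_1, \ell_2\}$, the complete bipartite subgraph induced by $V_{\ell_1} \cup V_{\ell_2}$ is $K_{|V_{\ell_1}|,|V_{\ell_2}|}$ with both sides of size at least $2$, hence connected and cyclic, and panchromaticity attaches every $I$-vertex of these parts to this component. When $\ell_2 = j$, the vertex $x_D$ is adjacent to all of $V'_{\ell_1}$ (to the clique $V_{\ell_1}$ trivially and to $V'_{\ell_1} \cap I$ by Lemma~\ref{lemme2}), which gives connectedness; the two neighbors $v_1, v_2 \in V_{\ell_1}$ of $y_{\ell_1}$ then produce the $4$-cycle $v_1 - x_D - v_2 - y_{\ell_1} - v_1$, which prevents $B(V'_{\ell_1}, V'_j, G)$ from being a tree.

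The main difficulty lies precisely in this second case: because $V_j$ reduces to a single vertex, the bipartite graph $B(V'_\ell, V'_j, G)$ has minimal structural redundancy, and this is exactly where the hypothesis that some $y_i$ is adjacent to two vertices of color $c_i$ is indispensable for creating a cycle. The condition $d(y_i) > k$ is what makes this possible at all, since in a panchromatic $k$-coloring a hyperedge of size exactly $k$ is rainbow and cannot contain two vertices of the same color. Once both CIST-partition properties are verified, Theorem~\ref{thm2} concludes that $G$ admits $k$ completely independent spanning trees.
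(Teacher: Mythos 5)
Your proof is correct, but it takes a genuinely different route from the paper's. The paper constructs the trees explicitly: it first builds $k-1$ CIST inside the clique on $D \setminus \{x_D\}$ (invoking the result of Pai \textit{et al.}\ that two internal vertices per tree suffice in a complete graph), lets $x_D$ be the sole internal $D$-vertex of the remaining tree $T_j$, and then promotes the special vertices $y$ to internal vertices of $T_j$ in $I$, arguing via a covering analysis that $T_j$ still reaches every internal vertex of each other tree. You instead verify Araki's characterization (Theorem~\ref{thm2}) directly: the color classes partition $D$, the witnesses $y_i$ are placed in the class $V'_j$ of $x_D$, the remaining $I$-vertices are distributed arbitrarily, and the two CIST-partition conditions are checked, with the delicate pairs $(V'_{\ell_1},V'_j)$ handled by the $4$-cycle $v_1\,x_D\,v_2\,y_{\ell_1}$. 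Your route is cleaner in that it avoids the dependence on the Pai \textit{et al.}\ construction, isolates exactly where the hypothesis on the $y_i$ is used (only in the pairs involving the singleton color class), and treats the leftover $I$-vertices transparently; the paper's route, in exchange, yields the trees themselves, which is of algorithmic value.

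One point to tighten: in the case $\ell_2=j$ you derive connectedness of $B(V'_{\ell_1},V'_j,G)$ only from $x_D$ being adjacent to all of $V'_{\ell_1}$, which connects $\{x_D\}\cup V'_{\ell_1}$ but says nothing about the other vertices of $V'_j$ (the $y_i$'s and any $I$-vertices you assigned there); an isolated such vertex would itself be a tree component. The gap closes with the same panchromaticity observation you use in the other case: for every $z\in V'_j\cap I$ the hyperedge $e_z$ contains the color $c_{\ell_1}$, so $z$ has a neighbour in $V_{\ell_1}\subseteq V'_{\ell_1}$ and the whole bipartite graph is one component containing your $4$-cycle.
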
    
\begin{proof}
    Let $x_D$ be the vertex of unique color. Since $\varphi$ is panchromatic, every edge contains $k$ colors. The $k$ completely independent spanning trees $T_1,T_2,\dots,T_k$ are constructed as follows: Each vertex of color $c_i$ is an internal vertex of the tree $T_i$ in $G$. As previously shown in the proof of Theorem \ref{lemme1}, the subgraph induced by $D - \{ x_D \}$ contains $k-1$ completely independent spanning trees.
    Without loss of generality, let $T_j$ be the tree with $x_D$ its unique internal vertex in $D$. Let $T_i$ be another spanning tree and $x_{i_1}, x_{i_2}$ two of its internal vertices. Let $y\in I$ be the internal vertex of $T_j$ in $I$ that is adjacent to $ x_{i_1}$ and $ x_{i_2}$. Since $y$ is a leaf of $T_i$, it can be covered only by one internal vertex $x_{i_1}$ of $T_i$. It follows that the vertex $x_D$ can cover $x_{i_1}$ and is covered by $x_{i_2}$. Also, $x_D$ covers all internal vertices of $T_i$ non-adjacent to $y$, and $y$ covers the remaining internal vertices of $T_i$ (see Figure \ref{fig6}). Therefore, $T_j$ is a spanning tree and $T_1,T_2,\dots,T_k$ are $k$ completely independent spanning trees.
\end{proof}

\begin{figure}[ht]
    \begin{center}
    \begin{tikzpicture}
        \node[regular polygon, regular polygon sides=3,draw=black,fill=blue,scale=0.5] (1) at (2,1) {};
        \node[regular polygon, regular polygon sides=3,draw=black,fill=blue,scale=0.5] (2) at (4.5,1.7) {};
        \node[regular polygon, regular polygon sides=3,draw=black,fill=blue,scale=0.5] (3) at (3,3) {};
        \node[shape=circle,draw=black,fill=red] (4) at (1,3) {};
        \node[shape=circle,draw=black,fill=red,scale=0.7] (7) at (2,5) {};
        \path [-,red,thick] (4) edge node {} (7);    
        \path [-,blue,thick] (1) edge node {} (2);    
        \path [-,red,thick] (1) edge node {} (4);     
        \path [-,blue,thick] (2) edge node {} (3);    
        \path [-,blue,thick] (2) edge node {} (4);
        \path [-,red,thick] (3) edge node {} (4);
        \path [-,blue,thick] (7) edge node {} (3);    
        \draw[-,red,thick] (2) to[out=90, in= 0] (7); 
        \path [-,thick,red] (6.5,4.5) edge node {} (6,4.5);  
        \path [-,thick,blue] (6.5,5) edge node {} (6,5);      
        \node[] at (5.5,4.5) {$T_j$};
        \node[] at (5.5,5) {$T_i$};
        \node at (0.5,3.2) {$x_D$};
        \node at (3.5,3.2) {$x_{i_1}$};
        \node at (5,1.7) {$x_{i_2}$};
        \node at (1.7,5.2) {$y$};
    \end{tikzpicture}
    \caption{Illustration of the proof of Proposition~\ref{lemme5}}
    \label{fig6}
    \end{center}
\end{figure}
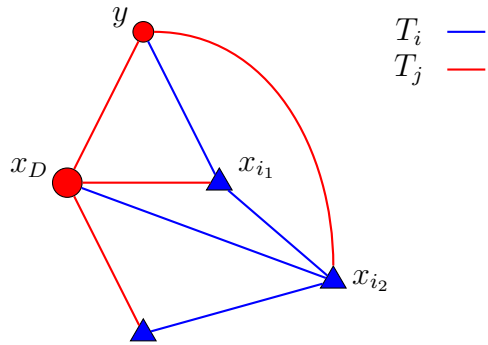
\begin{obs}
    If at least two unique colors are present in a panchromatic $k$-coloring of $H$, then all of their corresponding vertices can be grouped two by two until at most one vertex of unique color remains. By "grouping", we mean that, without loss of generality, for vertices $x_i$ and $x_j$ with unique colors $c_i$ and $c_j$, respectively, we change the color of $x_i$ to $c_j$ (hence color $c_j$ is no longer a unique color and color $c_i$ disappears). 
    \label{obs2}
\end{obs}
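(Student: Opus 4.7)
The plan is to establish the observation by a one-step reduction combined with iteration. The single step asserts that if a panchromatic $k$-coloring $\varphi$ has two unique colors $c_i$ and $c_j$, then the coloring $\varphi'$ obtained by recoloring the sole representative $x_i$ of color $c_i$ to color $c_j$ is a panchromatic $(k-1)$-coloring with exactly two fewer unique colors than $\varphi$. Iterating this step decreases the number of unique colors by two each time, so at most one unique color remains at termination.

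The key auxiliary fact, which is essentially the hypergraph analogue of Lemma~\ref{lemme2}, is that in any panchromatic coloring the sole vertex of a unique color must belong to every hyperedge of $H$: otherwise some hyperedge would miss that color, contradicting panchromaticity. Consequently, in the setting above, both $x_i$ and $x_j$ lie in every hyperedge of $H$.

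With this in hand, I would verify panchromaticity of $\varphi'$ directly. After the recoloring, color $c_i$ no longer appears in $H$ (since $x_i$ was its unique representative), so $\varphi'$ uses exactly $k-1$ colors, and color $c_j$ is now carried by both $x_i$ and $x_j$, hence is no longer unique. For any hyperedge $e$, the auxiliary fact yields $x_j\in e$, so $c_j$ is present in $e$; and for each remaining color $c_\ell\notin\{c_i,c_j\}$, panchromaticity of $\varphi$ guarantees some vertex $v\in e$ with $v\neq x_i$ and $\varphi(v)=c_\ell$, which is preserved by $\varphi'$. Thus every hyperedge contains all $k-1$ surviving colors under $\varphi'$. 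Moreover, every unique color of $\varphi$ other than $c_i$ and $c_j$ remains unique under $\varphi'$, since no vertex other than $x_i$ has changed color.

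The main (minor) subtlety lies in tracking that exactly two unique colors are removed per step — $c_i$ vanishes and $c_j$ becomes non-unique — and in verifying that no new unique colors are accidentally created; both facts follow directly from the locality of the recoloring. Starting from $u\geq 2$ unique colors, applying the grouping operation $\lfloor u/2 \rfloor$ times therefore yields a panchromatic coloring of $H$ with at most one unique color, as claimed.
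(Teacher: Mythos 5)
Your proposal is correct and follows essentially the same route the paper intends: the observation is justified by exactly this recoloring argument, with the preservation of panchromaticity resting on the fact (the hypergraph form of Lemma~\ref{lemme2}) that a unique-color vertex lies in every hyperedge. Your explicit bookkeeping—that each grouping step removes exactly two unique colors, creates no new ones, and yields a panchromatic $(k-1)$-coloring—just spells out what the paper leaves implicit.
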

\begin{thm}
   Let $G = (D \cup I,E)$ be a split graph. If $H(G)$ is panchromatically $k$-colorable, then $G$ has $(k - \lceil \frac{\alpha_k(H(G))}{2} \rceil)$ completely independent spanning trees. 

   Moreover, if $H(G)$ is not $k$-uniform and there is a panchromatic $k$-coloring of $H(G)$ for which there is only one unique color $c_j$ and for each color $c_i \neq c_j $, there exists a vertex $y \in I$ such that $d(y) > k$ and $y$ is adjacent to at least two vertices of color $c_i$, then $G$ has $(k - \lceil \frac{\alpha_k(H(G))}{2} \rceil + 1)$ completely independent spanning trees. 
   \label{thm4}
\end{thm}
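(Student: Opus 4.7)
The plan is to begin with a panchromatic $k$-coloring $\varphi$ of $H(G)$ that achieves $\alpha := \alpha_k(H(G))$ unique colors, and to reduce it by iteratively applying the pairing of Observation \ref{obs2}, so as to obtain a coloring to which Theorem \ref{lemme1} or Proposition \ref{lemme5} can be applied.

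First, I would perform $\lfloor \alpha/2 \rfloor$ pairings of unique-colored vertices as in Observation \ref{obs2}: each pairing recolors the vertex of a unique color $c_i$ with the color $c_j$ of another unique-colored vertex, so that $c_j$ becomes non-unique, $c_i$ disappears from the palette, and panchromaticity is preserved (every hyperedge still contains every remaining color). The outcome is a panchromatic $(k - \lfloor \alpha/2 \rfloor)$-coloring $\varphi'$ with $\alpha \bmod 2 \in \{0,1\}$ remaining unique colors.

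For the first claim, if $\alpha$ is even then $\varphi'$ has no unique color and is bipanchromatic on $k - \alpha/2 = k - \lceil \alpha/2 \rceil$ colors; Theorem \ref{lemme1} then yields $k - \lceil \alpha/2 \rceil$ CIST. If $\alpha$ is odd then $\varphi'$ has exactly one unique color, and recoloring that last vertex with any other color absorbs the unique color into an existing non-unique one, producing a bipanchromatic $(k - (\alpha+1)/2)$-coloring to which Theorem \ref{lemme1} applies, once more yielding $k - \lceil \alpha/2 \rceil$ CIST.

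For the second claim, I would choose the pairings so that $\varphi'$ is precisely the panchromatic $(k - \lceil \alpha/2 \rceil + 1)$-coloring with one unique color promised by the hypothesis of the theorem. Rather than absorbing the last unique color as in the previous paragraph, I would apply Proposition \ref{lemme5} directly to $\varphi'$, gaining one extra tree and obtaining $k - \lceil \alpha/2 \rceil + 1$ CIST. The main obstacle is to justify that the pairings of the first step can be organized so that the hypothesis of Proposition \ref{lemme5} transfers from the original $k$-coloring to $\varphi'$: the degree condition is automatic since $k - \lceil \alpha/2 \rceil + 1 \le k$, but for every new non-unique color created by pairing one must exhibit a vertex $y \in I$ of sufficient degree adjacent to both paired vertices, which typically forces pairing together unique-colored vertices that already share a common high-degree neighbor in $I$.
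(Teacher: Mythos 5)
Your treatment of the first claim is correct and is essentially the paper's own argument: start from a panchromatic $k$-coloring realizing $\alpha=\alpha_k(H(G))$, pair up the unique-colored vertices via Observation~\ref{obs2}, absorb a possible leftover unique color into a color already used at least twice, and apply Theorem~\ref{lemme1} to the resulting bipanchromatic $(k-\lceil \alpha/2\rceil)$-coloring.

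The second claim is where there is a genuine gap. The hypothesis does not promise ``the panchromatic $(k-\lceil\alpha/2\rceil+1)$-coloring with one unique color'' produced by your pairings; it promises a panchromatic $k$-coloring of $H(G)$ (on the full palette of $k$ colors) having exactly one unique color $c_j$ and satisfying the degree/adjacency conditions. Such a coloring is itself a witness that $\alpha_k(H(G))\le 1$, since $\alpha_k$ is a minimum over all panchromatic $k$-colorings; hence in the intended case $\alpha_k=1$ we have $k-\lceil\alpha_k/2\rceil+1=k$, and the conclusion follows by applying Proposition~\ref{lemme5} directly to the coloring supplied by the hypothesis --- no pairings are performed and nothing needs to be ``transferred''. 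This is exactly what the paper does. Your plan instead reduces a coloring achieving $\alpha$ by pairings and then must verify the hypotheses of Proposition~\ref{lemme5} for the reduced coloring $\varphi'$; you explicitly acknowledge this verification as an unresolved obstacle (exhibiting, for each newly merged color, a high-degree vertex of $I$ adjacent to both paired vertices), so the second half of the theorem is not actually proved in your proposal. The obstacle disappears once the hypothesis is read correctly, because the scenario you are guarding against --- the minimizing coloring having several unique colors while some other $k$-coloring has only one --- contradicts the definition of $\alpha_k$. (One caveat that affects the paper as much as you: if $\alpha_k=0$, the stated conclusion would be $k+1$ CIST, which Proposition~\ref{lemme5} does not give; the intended reading is $\alpha_k=1$.)
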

\begin{proof}
    Let $\varphi$ be a panchromatic $k$-coloring of $H(G)$ with the minimum number of unique colors $\alpha = \alpha_k(H(G))$. According to Observation \ref{obs2}, pairing up the vertices of unique colors two by two, the remaining is at most one. If such a vertex remains, let us change its color to any other color already assigned (to at least two vertices), resulting in a bipanchromatic $(k - \lceil \frac{\alpha}{2} \rceil)$-coloring. Therefore, by Theorem \ref{lemme1}, $G$ has $(k -  \lceil \frac{\alpha}{2} \rceil)$ completely independent spanning trees.
    Moreover, according to Proposition \ref{lemme5}, $G$ has $(k - \lceil \frac{\alpha}{2} \rceil + 1)$ completely independent spanning trees if $H(G)$ is not uniform, and for the vertex $x_D$ of unique color $c_j$ and each color $c_i \neq c_j$, there exists a vertex $y \in I$ such that $d(y) > k$ and $y$ is adjacent to at least two vertices of color $c_i$. 
\end{proof}

\subsection{Bounds on the number of CIST in split graphs}
\begin{thm}
   Let $G = (D \cup I,E)$ be a split graph and let $k = \chi_p^2 (H(G))$.  $G$ does not have $k+ 2$ completely independent spanning trees. 
   \label{thm7}
\end{thm}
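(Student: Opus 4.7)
The plan is to argue by contradiction: assume $G$ admits $k+2$ CIST and use the resulting structure to exhibit a bipanchromatic $(k+1)$-coloring of $H(G)$, contradicting $k=\chi_p^2(H(G))$. By Theorem~\ref{thm2}, a family of $k+2$ CIST yields a CIST-partition $V_1,\dots,V_{k+2}$ of $V(G)$. Mimicking the proof of Theorem~\ref{thm1}, I color every vertex of $V_i\cap D$ with color $i$; this produces a panchromatic $(k+2)$-coloring $\varphi$ of $H(G)$. Note that every $V_i\cap D$ is nonempty: otherwise $V_i\subseteq I$ would force $G[V_i]$ to be edgeless, hence a singleton, which in turn would form an isolated (hence tree) component in any $B(V_i,V_j,G)$, violating the CIST-partition property.

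The heart of the argument is the following lemma, which I would state and prove first: in any CIST-partition of a split graph, at most one part $V_i$ satisfies $|V_i\cap D|=1$. I argue by contradiction. Suppose $V_i\cap D=\{x\}$ and $V_j\cap D=\{x'\}$ for two distinct indices $i,j$. In $B(V_i,V_j,G)$, any vertex $y\in V_j\cap I$ can only be adjacent in $V_i$ to $x$, because $I$ is independent and $x$ is the only $D$-vertex of $V_i$; the absence of isolated vertices (a singleton is a tree component) then forces every $y\in V_j\cap I$ to be adjacent to $x$. Symmetrically, every $y'\in V_i\cap I$ is adjacent to $x'$. Together with the edge $xx'$, which exists because $D$ is a clique, these are the only edges of $B(V_i,V_j,G)$. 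Counting yields
\[
|E(B(V_i,V_j,G))| = 1 + |V_i\cap I| + |V_j\cap I| = |V_i\cup V_j| - 1,
\]
and the graph is connected, hence a single tree component, contradicting the CIST-partition definition.

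It remains to turn $\varphi$ into a bipanchromatic coloring. By the lemma, $\varphi$ has at most one unique color. If it has none, then $\varphi$ itself is a bipanchromatic $(k+2)$-coloring, so $\chi_p^2(H(G))\geq k+2$. If it has exactly one unique color $c$ on some vertex $x_D$, I apply the recoloring of Observation~\ref{obs2}: reassign $x_D$ to any other color, which is already carried by at least two vertices, producing a bipanchromatic $(k+1)$-coloring and giving $\chi_p^2(H(G))\geq k+1$. In both cases $\chi_p^2(H(G))>k$, contradicting the hypothesis.

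The main obstacle is the tree-component count in the lemma: it is crucial that \emph{both} parts $V_i,V_j$ contain a single $D$-vertex, for this is what forces every cross-edge in $B(V_i,V_j,G)$ to funnel through $x$ or $x'$ and lets us enumerate the edges exactly. A single singleton class is not forbidden (and indeed the theorem leaves room for a $(k+1)$-st CIST precisely when exactly one unique color survives in every panchromatic $(k+1)$-coloring), which is why the bound is $k+2$ and not $k+1$.
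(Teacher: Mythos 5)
Your proof is correct and follows the same overall skeleton as the paper's: assume $k+2$ CIST, turn the CIST-partition of Theorem~\ref{thm2} into a panchromatic $(k+2)$-coloring of $H(G)$, show that at most one color can be unique, and merge that unique color away to get a bipanchromatic coloring with at least $k+1$ colors, contradicting $k=\chi_p^2(H(G))$. Where you genuinely diverge is in ruling out two ``singleton'' classes: the paper (its Case 2) argues directly on two trees $T_1,T_2$ whose unique internal vertices lie in $D$, via a covering argument and an informal five-versus-six edge count, whereas your lemma stays entirely inside Araki's partition framework and shows that if $|V_i\cap D|=|V_j\cap D|=1$ then $B(V_i,V_j,G)$ is connected with exactly $|V_i\cup V_j|-1$ edges, hence a tree component, which the CIST-partition definition forbids. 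This buys a cleaner and fully rigorous version of that case (no ``best case'' configuration to inspect), at no loss of generality, and it never needs to go back to the trees themselves. One cosmetic point: when you argue $V_i\cap D\neq\emptyset$, a singleton class $V_i=\{y\}\subseteq I$ need not be \emph{isolated} in $B(V_i,V_j,G)$ --- $y$ may have neighbors in $V_j\cap D$, so its component is a star --- but a star (or a lone vertex) is still a tree component, so the conclusion, and with it the rest of your argument, stands.
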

\begin{proof}
    Suppose that $G$ has $k=\chi_p^2(H(G)) + 2$ completely independent spanning trees. The following cases are considered:
    \begin{itemize}
        \item Case 1: At most one tree has a single internal vertex in $D$.\\
        Let $t \in \{ 0,1 \}$ be the number of trees that have a single internal vertex in $D$. If $t=0$, then all internal vertices of each spanning tree belong to $D$. Hence, each vertex of $I$ is adjacent to $k+2$ internal vertices of different trees. According to the construction of the proof of Theorem \ref{lemme1}, each color appears at least twice in $H(G)$. Otherwise, if $t=1$, then only one tree has one unique internal vertex in $D$. Consequently, each vertex of $I$ is adjacent to $k + 1$ internal vertices of different trees. Reassigning that unique vertex to another color, the construction of Theorem \ref{lemme1} shows that each color appears at least twice in $H(G)$. Thus, for each value of $t$, $H(G)$ is at least bipanchromatically $(k + 1)$-colorable, a contradiction. 
        \item Case 2: At least two trees have a single internal vertex in $D$.\\
        Without loss of generality, let $T_1$ and $T_2$ be the considered trees and $x_1, x_2$ their unique internal vertex in $D$, respectively. In the best case, these vertices are both adjacent to at least two common vertices in $I$, denoted $i_1$ and $i_2$. However, if $x_1$ covers $x_2$ in $T_1$, $x_2$ cannot cover $x_1$ in $T_2$. Also, even if $i_1$ and $i_2$ are internal vertices of $T_2$, they cannot cover $x_1$ in $T_2$ because they are already covered by $x_1$ in $T_1$. Thus, $T_2$ is not a spanning tree, a contradiction. 
        Another way to illustrate the contradiction is through the number of edges:
        The trees $T_1$ and $T_2$ require six edges to cover $x_1,x_2,i_1, i_2$, but the actual number of edges is five, as shown in Figure \ref{contr}.
    \end{itemize}
\end{proof}
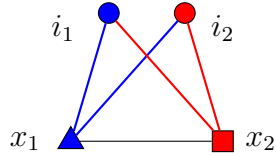
\begin{figure}[ht]
    \begin{center}
    \begin{tikzpicture}
        \node[regular polygon, regular polygon sides=3,draw=black,fill=blue,scale=0.5] (1) at (0.5,-0.5) {};
        \node[shape=rectangle,draw=black,fill=red] (2) at (2.5,-0.5) {};
        \node[circle,draw=black,fill=blue,scale=0.7] (3) at (1,1.2) {};
        \node[shape=circle,draw=black,fill=red,scale=0.7] (4) at (2,1.2) {};
        \path [-,thick,blue] (1) edge node {} (3);    
        \path [-] (1) edge node {} (2);    
        \path [-,thick,blue] (1) edge node {} (4);     
        \path [-,thick,red] (2) edge node {} (3);    
        \path [-,thick,red] (2) edge node {} (4);
        \node at (-0.1,-0.5) {$x_1$};
        \node at (3,-0.5) {$x_2$};
        \node at (0.4,1) {$i_1$};       
        \node at (2.5,1) {$i_2$};    
    \end{tikzpicture}
    \caption{Configuration of proof of Theorem \ref{thm7} leading to a contradiction (Case~2)}
    \label{contr}
    \end{center}
\end{figure}
The following theorem summarizes the previous results.
\begin{thm}
    Let $G = (D \cup I,E)$ be a split graph and $H(G)$ its corresponding hypergraph. The maximum number $M$ of CIST of $G$ satisfies: $$\chi_p^2(H(G)) \leq M \leq \chi_p^2(H(G))+ 1$$
    \label{prop}
\end{thm}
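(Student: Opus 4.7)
The plan is to establish each inequality separately using the two main results already proved in this subsection. Both bounds follow more or less directly by invoking earlier theorems, so the work is essentially bookkeeping rather than new argument.

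For the lower bound, I would set $k = \chi_p^2(H(G))$. By definition of the bipanchromatic number, $H(G)$ admits a bipanchromatic $k$-coloring. Theorem~\ref{lemme1} then applies and yields that $G$ has $k$ completely independent spanning trees, so $M \geq k = \chi_p^2(H(G))$. This takes care of the left inequality.

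For the upper bound, I would again set $k = \chi_p^2(H(G))$ and appeal directly to Theorem~\ref{thm7}, which states that $G$ cannot contain $k+2$ completely independent spanning trees. Hence $M \leq k+1 = \chi_p^2(H(G))+1$, giving the right inequality. Combining the two yields the claimed sandwich $\chi_p^2(H(G)) \leq M \leq \chi_p^2(H(G)) + 1$.

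The only subtlety worth mentioning is a degenerate case: if $\chi_p^2(H(G))$ is not defined because $H(G)$ admits no bipanchromatic $2$-coloring at all (as illustrated by Figure~\ref{ref2}), then the lower bound is vacuous and one should interpret $\chi_p^2(H(G))$ as $0$ or $1$ depending on convention; the upper bound still holds since Theorem~\ref{thm7} only requires the value $k=\chi_p^2(H(G))$ to plug in. Aside from this remark, there is no real obstacle: the statement is a direct consequence of Theorems~\ref{lemme1} and~\ref{thm7}, and the proof should simply be a two-line synthesis citing both.
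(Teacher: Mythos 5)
Your proof is correct and matches the paper's: Theorem~\ref{prop} is stated there as a direct summary of Theorem~\ref{lemme1} (lower bound) and Theorem~\ref{thm7} (upper bound), exactly the two citations you make. Your remark on the degenerate case where no bipanchromatic $2$-coloring exists is a reasonable extra precision that the paper itself leaves implicit.
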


\section{Complexity}\label{sec4}
In this section, we investigate the computational complexity of the two problems considered in this paper, namely the bipanchromatic $k$-coloring problem in hypergraphs and the $k$ CIST problem in split graphs, where $k \geq 2$. First, we start with the bipanchromatic $k$-coloring problem, whose recognition version is defined as follows: 

\begin{table}[H]
\raggedright
\begin{tabular}{p{2cm} p{10cm}}
\multicolumn{2}{l}{\textsc{\textbf{Bipanchromatic $k$-Coloring Problem ($k$-BiCP)}}} \\
\textbf{Instance:}  & A hypergraph $H$ and an integer $k \geq 2$. \\
\textbf{Question:}  & Does $H$ admit a bipanchromatic $k$-coloring? \\
\end{tabular}
\end{table}

\begin{thm}
    \textsc{$k$-BiCP} in hypergraphs is NP-Complete.
\end{thm}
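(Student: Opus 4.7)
The plan is to establish membership in NP and NP-hardness separately. Membership in NP is immediate: a candidate bipanchromatic $k$-coloring is a polynomial-size certificate, and one can verify in polynomial time both that every hyperedge contains all $k$ colors and that every color is used at least twice.

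For NP-hardness, I would reduce from the panchromatic $k$-coloring problem, which the authors have already noted is NP-complete for every $k \geq 2$. Given such an instance $(H, k)$ with $H = (V, \mathcal{E})$, the idea is to pad $H$ with a small gadget that duplicates every color "for free" without constraining how the original hyperedges must be colored. Concretely, I would build $H' = (V', \mathcal{E}')$ with $V' = V \cup \{v_1, \ldots, v_k, w_1, \ldots, w_k\}$ (where the $2k$ vertices $v_i, w_i$ are fresh) and $\mathcal{E}' = \mathcal{E} \cup \{\{v_1, \ldots, v_k\}, \{w_1, \ldots, w_k\}\}$. This construction is clearly polynomial.

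The equivalence then unfolds in two short steps. For the forward direction, given a panchromatic $k$-coloring $\varphi$ of $H$, extend it by setting $\varphi'(v_i) = \varphi'(w_i) = i$ for every $i \in \{1, \ldots, k\}$; each original hyperedge remains panchromatic, the two new hyperedges trivially contain all $k$ colors, and each color $i$ is carried by at least the pair $v_i, w_i$, so $\varphi'$ is a bipanchromatic $k$-coloring of $H'$. For the converse, given a bipanchromatic $k$-coloring $\varphi'$ of $H'$, its restriction to $V$ is a panchromatic $k$-coloring of $H$, since every hyperedge of $H$ is already a hyperedge of $H'$ and is therefore assigned all $k$ colors by $\varphi'$.

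I do not expect a real obstacle here. The one detail worth a careful word is the reverse direction: the bipanchromatic condition on $H'$ is strictly stronger than panchromatic on $H$, but we only need the panchromatic property on $H$, so no "each color appears twice inside $V$" guarantee has to be recovered; the two gadget hyperedges absorb the duplication requirement on their own. This also ensures the reduction is valid in corner cases such as $H$ having no hyperedges, and it is oblivious to whether $k$ is part of the input or fixed.
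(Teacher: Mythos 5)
Your proof is correct, and it follows the same overall strategy as the paper (membership in NP plus a reduction from panchromatic $k$-coloring), but with a genuinely different gadget. The paper pads $H$ by taking \emph{two disjoint copies} $H_1,H_2$ of the whole hypergraph plus one hyperedge containing all vertices of both copies; duplicating a panchromatic coloring across the copies makes every color appear twice, and conversely the restriction to one copy recovers a panchromatic coloring of $H$. You instead attach $2k$ fresh vertices arranged in two rainbow hyperedges $\{v_1,\dots,v_k\}$ and $\{w_1,\dots,w_k\}$, which absorb the ``each color twice'' requirement locally while leaving the original hyperedges untouched; both directions of your equivalence are sound, and your observation that the reverse direction only needs the restriction to $V$ (no duplication inside $V$ has to be recovered) is exactly the right point to make. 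What each approach buys: your gadget is smaller and the argument more local, and it handles the no-hyperedge corner case cleanly; the paper's duplication has the advantage that the size of $H'$ is independent of $k$ (your construction adds $2k$ vertices, so if $k$ is part of the input in binary you should add the routine remark that instances with $k$ exceeding the number of vertices of a hyperedge-containing $H$ are trivial and can be mapped to a fixed instance), and the two-copy idea is reused almost verbatim in the paper's subsequent reduction for \textsc{$k$-CIST} in split graphs, giving the two hardness proofs a uniform shape. Neither difference is a gap; your argument stands as written.
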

\begin{proof}
    First, \textsc{$k$-BiCP} is in NP. In fact, given a color assignment $\varphi$ on a hypergraph $H$, we can check in polynomial time that all $k$ colors occur within each hyperedge of $H$ and at least twice in $H$.We now demonstrate that it is NP-Complete through a polynomial reduction from the panchromatic $k$-coloring problem on hypergraphs. Given a hypergraph $H$ and an integer $k$, this problem asks whether there exists a coloring where each hyperedge of $H$ contains at least one vertex of each color. This problem is known to be NP-Complete \cite{garey} (recall that a panchromatic $2$-coloring is equivalent to a $2$-coloring).
    
    Given an instance of the panchromatic $k$-coloring problem on a hypergraph $H$, we construct an instance of \textsc{$k$-BiCP} on a hypergraph $H'$ as follows. $H'$ consists of two disjoint copies of $H$ denoted $H_1$ and $H_2$, and one hyperedge containing all the vertices of the two copies (see Figure~\ref{alpha7}). If $H$ is panchromatically $k$-colorable, then it is also the case for $H_1$ and $H_2$. Since these two copies are disjoint, from panchromatic $k$-colorings of $H_1$ and $H_2$ defined on the same set of $k$ colors, we obtain a bipanchromatic $k$-coloring of $H'$ (each of the $k$ colors appears at least once in each copy, thus at least twice in $H'$). Conversely, if $H'$ is bipanchromatically $k$-colorable, then by definition each hyperedge contains all $k$ colors. Since $H_1$ and $H_2$ are disjoint, this means that all of their hyperedges contain all $k$ colors. It implies that $H_1$ and $H_2$ are panchromatically $k$-colorable; otherwise, at least one hyperedge in $H'$ would not contain all colors, a contradiction. Since $H_1$ and $H_2$ are two copies of $H$, it suffices to color the vertices of $H$ with the coloring of one of the two copies. Hence, this constitutes a polynomial reduction from the panchromatic $k$-coloring existence problem to \textsc{$k$-BiCP}.
\end{proof}

\begin{figure}[ht]
    \begin{center}
    \begin{tikzpicture}[scale=0.95]
        \node[shape=circle,draw=black,scale=1.5,dashed] (1) at (0,0) {$H$};
        \node[shape=circle,draw=black,scale=1.5,dashed] (2) at (4,0) {$H_1$};
        \node[shape=circle,draw=black,scale=1.5,dashed] (3) at (7,0) {$H_2$};           
        \draw (5.5,0) ellipse (2.5cm and 1.5cm);
        \node[scale=1.5] at (5.5,-2) {$H'$};
        \draw[dashed] (5.5,-0.3) ellipse (3.5cm and 2.2cm);
    \end{tikzpicture}
    \caption{Hypergraphs $H$ and $H'$}
    \label{alpha7}
    \end{center}
\end{figure}
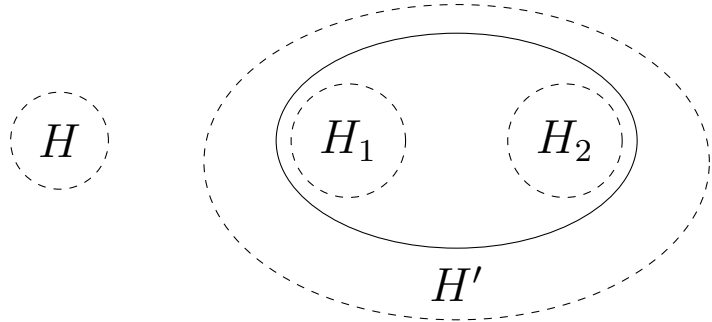

Now, we consider the $k$ CIST problem in split graphs; its recognition version is given by:
\begin{table}[H]
\raggedright
\begin{tabular}{p{2cm} p{10cm}}
\multicolumn{2}{l}{\textsc{\textbf{$k$ Completely Independent Spanning Trees Problem ($k$-CIST)}}} \\
\textbf{Instance:}  & A graph $G$ and an integer $k \geq 2$. \\
\textbf{Question:}  & Does $G$ have $k$ completely independent spanning trees? \\
\end{tabular}
\end{table}

\begin{thm}
    \textsc{$k$-CIST} in split graphs is NP-Complete.
\end{thm}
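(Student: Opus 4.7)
The plan is to establish NP-completeness in two steps. Membership in NP will be routine: a $k$-tuple of spanning trees of $G$ forms a polynomial-size certificate, and by Theorem~\ref{thm00} one only has to verify in polynomial time that the trees are edge-disjoint and that no vertex is internal in more than one of them. For hardness, I plan a Karp reduction from the Panchromatic $k$-Coloring problem (PCP) on hypergraphs, which is known to be NP-complete for $k\ge 2$.

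Given an instance $H$ of PCP, I will reuse the auxiliary hypergraph from the previous proof: form $H'$ as two disjoint copies $H_1,H_2$ of $H$ together with one additional hyperedge containing all of $V(H_1)\cup V(H_2)$, and then take as the reduction target the split graph $G=G(H')$ whose clique is $D=V(H_1)\cup V(H_2)$ and whose independent set $I$ has one vertex per hyperedge of $H'$. Since the universal hyperedge guarantees every vertex of $D$ a neighbor in $I$, and since non-trivial PCP instances satisfy $|V(H)|\ge k\ge 2$ (so $|D|\ge 4$), the construction is polynomial and yields a well-defined split graph with $H(G)=H'$.

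The equivalence \emph{$H$ is panchromatically $k$-colorable iff $G$ has $k$ CIST} will then assemble from results already in the paper. In the forward direction I will lift a panchromatic $k$-coloring of $H$ to the same coloring on both copies; every copied hyperedge and the universal one will then contain all $k$ colors, and since every color appears at least once in each copy it appears at least twice in $H'$, so the coloring is actually bipanchromatic and Theorem~\ref{lemme1} produces $k$ CIST in $G$. For the converse, Theorem~\ref{thm1} provides a panchromatic $k$-coloring of $H'=H(G)$, whose restriction to $V(H_1)$ is panchromatic on $H_1\cong H$ since every hyperedge of $H_1$ is also a hyperedge of $H'$.

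The main obstacle I anticipate is precisely the panchromatic/bipanchromatic mismatch between Theorems~\ref{thm1} and~\ref{lemme1}, reflected in the $+1$ slack of Theorem~\ref{prop}: a naive reduction taking $G=G(H)$ directly would only yield the implication $G$ has $k$ CIST $\Rightarrow$ $H$ is panchromatic, not its converse. The doubling-plus-universal-hyperedge construction closes this gap exactly because two disjoint copies automatically force every color used in any panchromatic coloring of $H'$ to occur at least twice, making panchromaticity and bipanchromaticity coincide on $H'$ and letting the two polynomial-time implications chain into a valid Karp reduction.
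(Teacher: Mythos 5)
Your proposal is correct and takes essentially the same route as the paper: NP membership via Theorem~\ref{thm00}, and a doubling reduction from panchromatic $k$-coloring in which two disjoint copies force every color to appear at least twice, so that Theorem~\ref{lemme1} yields the $k$ CIST in the forward direction and a panchromatic coloring restricted to one copy gives the converse. The only cosmetic differences are that you additionally include the universal hyperedge borrowed from the \textsc{$k$-BiCP} reduction (harmless but unnecessary here, since the paper's $G'$ is simply two copies of $G(H)$ with the two cliques merged into one), and that you invoke Theorem~\ref{thm1} for the converse where the paper re-derives the same coloring directly from the CIST-partition.
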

\begin{proof}
    First, \textsc{$k$-CIST} $\in$ NP. Indeed, given a split graph $G$ and a set of $k$ spanning trees $T_1,T_2,\dots,T_k$, we can check in polynomial time that all trees are edge-disjoint and for any vertex $x \in V(G)$, there is at most one spanning tree $T_i$ such that $d_{T_i}(x)>1$ (Theorem \ref{thm00}). Now, we show that \textsc{$k$-CIST} is NP-Complete in split graphs through a polynomial reduction from the panchromatic $k$-coloring problem in hypergraphs. 

    Given an instance of the panchromatic $k$-coloring problem on a hypergraph $H$, we construct an instance of \textsc{$k$-CIST} on a split graph $G'$ as follows. We first recall the construction of a split graph associated with a hypergraph. Let 
    $H = (D,\mathcal{E})$ be a hypergraph, the corresponding split graph $G(H) = (D \cup I,E)$ is defined by: 
    \begin{itemize}
        \item $I = \{ i_e | e \in \mathcal{E} \}$.
        \item $E(G(H)) = \{ (u,v) | u,v \in D, u \ne v \} \cup \{ (v,i_e) | e\in \mathcal{E}, v \in e \}$.
    \end{itemize}

    Thus, the vertex set $D$ induces a clique, and $I$ an independent set.
    
    To construct the graph $G'=(D'\cup I',E')$, we take two copies of $G(H)$, denoted $G_1 = (D_1 \cup I_1, E_1)$ and $G_2 = (D_2 \cup I_2, E_2)$. Then, we define:
    \begin{itemize}
        \item $D' = D_1 \cup D_2$ and $I' = I_1 \cup I_2$;
        \item $E'=E_1 \cup E_2 \cup \{ (x_1,x_2) | x_1 \in D_1, x_2 \in D_2\}$ ($D'$ is a complete subgraph).
    \end{itemize}
    
      The construction of $G'$ can be done in polynomial time, with a complexity of $O(n^2)$ (the clique on $D'$ contains $O(n^2)$ edges). See Figure~\ref{alpha10} for an illustration of the construction. 
    \begin{figure}[ht]
        \begin{center}
        \begin{tikzpicture}
            \node[shape=circle,draw=black] (h1) at (-1.5,-1) {};
            \node[shape=circle,draw=black] (h2) at (0,-1) {};
            \node[shape=circle,draw=black] (h3) at (0,0.5) {};
            \node[shape=circle,draw=black] (h4) at (-1.5,0.5) {};
            \node[scale=1.5] at (-0.8,-3) {$H$};
            \node[scale=1.5] at (4,-3) {$G(H)$};
            \node[scale=1.5] at (8.5,-3) {$G'$};
            \node[shape=circle,draw=black] (1) at (2.5,-1) {};
            \node[shape=circle,draw=black] (2) at (4,-1) {};
            \node[shape=circle,draw=black] (3) at (4,0.5) {};
            \node[shape=circle,draw=black] (4) at (2.5,0.5) {};
            \node[shape=circle,draw=black,scale=0.7] (8) at (5,1.5) {};
            \node[shape=circle,draw=black,scale=0.7] (9) at (5.5,-1) {};
            \begin{scope}[fill opacity=0]
            \filldraw[fill=yellow!70] ($(h3)+(0.4,0.2)$)
                to[out=-90,in=90] ($(h2) + (0.4,-0.2)$)
                to[out=-90,in=-90] ($(h2) + (-0.4,-0.2)$) 
                to[out=90,in=-90] ($(h3) + (-0.4,0.2)$)  
                to[out=90,in=90] ($(h3) + (0.4,0.2)$)   ;
            \end{scope}    
            \begin{scope}[fill opacity=0]
            \filldraw[fill=yellow!70] ($(h3)+(0,0.5)$)
                to[out=180,in=0] ($(h4) + (0,0.5)$)    
                to[out=180,in=90] ($(h4) + (-0.5,0)$)
                to[out=-90,in=90] ($(h2) + (-0.6,0)$) 
                to[out=-90,in=180] ($(h2) + (0,-0.5)$) 
                to[out=0,in=-90] ($(h2) + (0.6,0)$)
                to[out=90,in=-90] ($(h3) + (0.6,0)$)        
                to[out=90,in=0] ($(h3) + (0,0.5)$)   ;
            \end{scope} 
            \begin{scope}[fill opacity=0]
            \filldraw[fill=yellow!70] ($(h3)+(0,0.6)$)
                to[out=180,in=0] ($(h1) + (0,0.5)$)    
                to[out=180,in=90] ($(h1) + (-0.5,0)$)
                to[out=-90,in=180] ($(h1) + (0,-0.6)$)
                to[out=0,in=180] ($(h2) + (0,-0.6)$) 
                to[out=0,in=-90] ($(h2) + (0.7,0)$)
                to[out=90,in=-90] ($(h3) + (0.7,0)$)        
                to[out=90,in=0] ($(h3) + (0,0.6)$)   ;
            \end{scope}    
            \path [-] (1) edge node {} (3);    
            \path [-] (1) edge node {} (2);    
            \path [-] (1) edge node {} (4);     
            \path [-] (2) edge node {} (3);    
            \path [-] (2) edge node {} (4);
            \path [-] (3) edge node {} (4);
            \path [-] (8) edge node {} (3);
            \path [-] (8) edge node {} (4);
            \path [-] (8) edge node {} (2);
            \path [-] (9) edge node {} (2);
            \path [-] (9) edge node {} (3);
            \node[shape=circle,draw=black,fill=white] (01) at (7,-2) {};
            \node[shape=circle,draw=black,fill=white] (02) at (8.5,-2) {};
            \node[shape=circle,draw=black,fill=white,] (03) at (8.5,-0.5) {};
            \node[shape=circle,draw=black,fill=white] (04) at (7,-0.5) {};
            \node[shape=circle,draw=black,fill=white] (11) at (7,1.5) {};
            \node[shape=circle,draw=black,fill=white] (12) at (8.5,1.5) {};
            \node[shape=circle,draw=black,fill=white] (13) at (8.5,3) {};
            \node[shape=circle,draw=black,fill=white] (14) at (7,3) {};
            \node[shape=circle,draw=black,scale=0.7] (09) at (10,-2) {};
            \node[shape=circle,draw=black,scale=0.7] (19) at (10,1.5) {};
            \node[shape=circle,draw=black,scale=0.7] (08) at (9.5,0.5) {};
            \node[shape=circle,draw=black,scale=0.7] (18) at (9.5,4) {};
            \path [-] (01) edge node {} (02);    
            \path [-] (01) edge node {} (03);    
            \path [-] (01) edge node {} (04);  
            \path [-] (01) edge node {} (12);    
            \path [-] (01) edge node {} (13);     
            \path [-] (02) edge node {} (03);    
            \path [-] (02) edge node {} (04);
            \path [-] (02) edge node {} (11);            
            \path [-] (02) edge node {} (14);    
            \path [-] (03) edge node {} (04);
            \path [-] (03) edge node {} (11);
            \path [-] (03) edge node {} (12);
            \path [-] (03) edge node {} (14);
            \path [-] (04) edge node {} (11);
            \path [-] (04) edge node {} (12);
            \path [-] (04) edge node {} (13);
            \path [-] (11) edge node {} (13);    
            \path [-] (11) edge node {} (12);    
            \path [-] (11) edge node {} (14);     
            \path [-] (12) edge node {} (13);    
            \path [-] (12) edge node {} (14);
            \path [-] (13) edge node {} (14);
            \draw[-] (01) to[out=70, in= -70] (11); 
            \draw[-] (01) to[out=70, in= -70] (14); 
            \draw[-] (04) to[out=70, in= -70] (14); 
            \draw[-] (02) to[out=110, in= -110] (12); 
            \draw[-] (02) to[out=110, in= -110] (13); 
            \draw[-] (03) to[out=110, in= -110] (13); 
            \path [-] (02) edge node {} (09);    
            \path [-] (03) edge node {} (09);     
            \path [-] (12) edge node {} (19);    
            \path [-] (13) edge node {} (19);     
            \path [-] (02) edge node {} (08);    
            \path [-] (03) edge node {} (08);  
            \path [-] (04) edge node {} (08);  
            \path [-] (12) edge node {} (18);    
            \path [-] (13) edge node {} (18);    
            \path [-] (14) edge node {} (18);  
            \draw[-] (9) to[out=-150, in= -30] (1); 
            \draw[-] (09) to[out=-150, in= -30] (01); 
            \draw[-] (19) to[out=-150, in= -30] (11); 
        \end{tikzpicture}
        \caption{A polynomial-time reduction from the panchromatic $2$-coloring problem in hypergraph $H$ to \textsc{$2$-CIST} in split graph $G'$}
        \label{alpha10}
        \end{center}
    \end{figure}
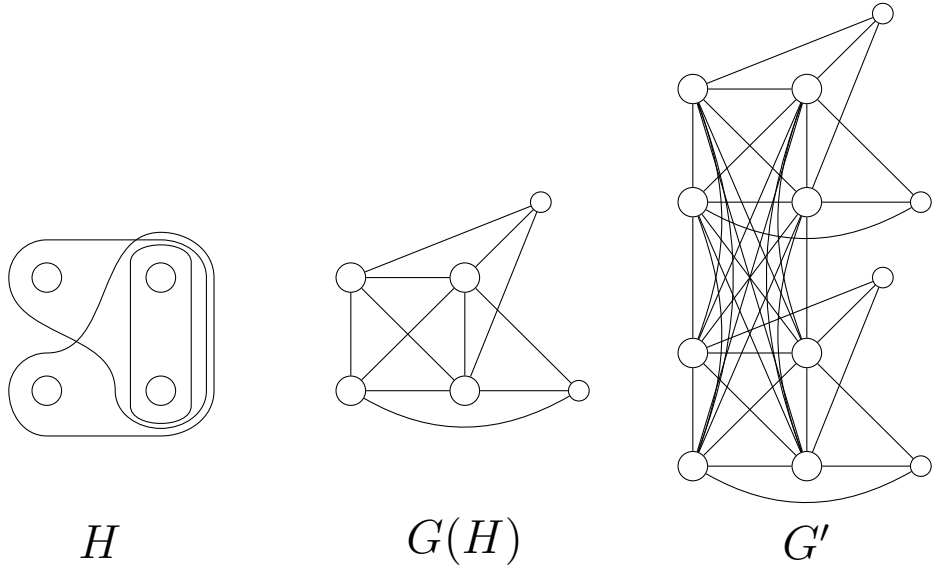    

    $\Longrightarrow$ We suppose that $\varphi$ is a panchromatic $k$-coloring of $H$. We use the same coloring $\varphi$ for $H(G_1)$ and $H(G_2)$, as they are isomorphic to $H$. Since each vertex of $H(G_1)$ has the same color as its corresponding vertex in $H(G_2)$, each color appears at least twice in $H(G')$. Therefore, $H(G')$ is bipanchromatically $k$-colorable. According to Theorem \ref{lemme1}, $G'$ has $k$ completely independent spanning trees. 
    
    $\Longleftarrow$ Conversely, assume that $G'$ admits $k$ completely independent spanning trees $T_1,T_2,\dots,T_k$. We construct a panchromatic $k$-coloring $\varphi'$ of the hypergraph $H(G')$ as follows: We assign color $i$ to each internal vertex of $T_i$ in $D'$, $ 1 \leq i \leq k$. As each vertex $y_1 \in E_1$ and each vertex $y_2 \in E_2$ is covered by internal vertices of all $k$ spanning trees in $G'$, each hyperedge $e_{y_1}$ and each $e_{y_2}$ contains all $k$ colors.
    Therefore $H(G_1)$ and $H(G_2)$ are both panchromatically $k$-colorable. Let $\varphi$ be a panchromatic $k$-coloring of $H(G_1)$. As $H(G)$ is isomorphic to $H(G_1)$, we use the same coloring $\varphi$ for $H(G)$. It follows that $H(G)$ is panchromatically $k$-colorable. Figure~\ref{alpha9} illustrates an example of the mapping from $H$ to $G'$. Color $1$ which is blue and color $2$ which is red form a panchromatic $2$-coloring in $H$. The blue tree $T_1$ and the red tree $T_2$ are $2$ CIST in $G'$. Note that the vertices $y_1$ and $y_2$ are covered by both trees $T_1$ and $T_2$ in $G'$, and the hyperedges $e_{y_1}$ and $e_{y_2}$ contains both colors.

    \begin{figure}[ht]
        \begin{center}
        \begin{tikzpicture}
            \node[shape=circle,draw=black,fill=red] (h1) at (-1.5,-1.5) {};
            \node[shape=circle,draw=black,fill=blue] (h2) at (0,-1.5) {};
            \node[shape=circle,draw=black,fill=red] (h3) at (0,0) {};
            \node[shape=circle,draw=black,fill=blue] (h4) at (-1.5,0) {};
            \node[scale=1.5] at (-0.8,-3) {$H$};
            \node[scale=1.5] at (4,-3) {$H(G')$};
            \node[scale=1.5] at (8.8,-3) {$G'$};
            \begin{scope}[fill opacity=0]
            \filldraw[fill=yellow!70] ($(h3)+(0.4,0.2)$)
                to[out=-90,in=90] ($(h2) + (0.4,-0.2)$)
                to[out=-90,in=-90] ($(h2) + (-0.4,-0.2)$) 
                to[out=90,in=-90] ($(h3) + (-0.4,0.2)$)  
                to[out=90,in=90] ($(h3) + (0.4,0.2)$)   ;
            \end{scope}    
            \begin{scope}[fill opacity=0]
            \filldraw[fill=yellow!70] ($(h3)+(0,0.5)$)
                to[out=180,in=0] ($(h4) + (0,0.5)$)    
                to[out=180,in=90] ($(h4) + (-0.5,0)$)
                to[out=-90,in=90] ($(h2) + (-0.6,0)$) 
                to[out=-90,in=180] ($(h2) + (0,-0.5)$) 
                to[out=0,in=-90] ($(h2) + (0.6,0)$)
                to[out=90,in=-90] ($(h3) + (0.6,0)$)        
                to[out=90,in=0] ($(h3) + (0,0.5)$)   ;
            \end{scope} 
            \begin{scope}[fill opacity=0]
            \filldraw[fill=yellow!70] ($(h3)+(0,0.6)$)
                to[out=180,in=0] ($(h1) + (0,0.5)$)    
                to[out=180,in=90] ($(h1) + (-0.5,0)$)
                to[out=-90,in=180] ($(h1) + (0,-0.6)$)
                to[out=0,in=180] ($(h2) + (0,-0.6)$) 
                to[out=0,in=-90] ($(h2) + (0.7,0)$)
                to[out=90,in=-90] ($(h3) + (0.7,0)$)        
                to[out=90,in=0] ($(h3) + (0,0.6)$)   ;
            \end{scope}    
            \node[] at (10.5,4.5) {$T_2$};
            \node[] at (10.5,5) {$T_1$};
            \path [-,thick,blue] (11,5) edge node {} (11.5,5);      
            \path [-,thick,red] (11,4.5) edge node {} (11.5,4.5);      
            \node[shape=circle,draw=black,fill=white,fill=red] (01) at (7.3,-2) {};
            \node[shape=circle,draw=black,fill=white,fill=blue] (02) at (8.8,-2) {};
            \node[shape=circle,draw=black,fill=white,fill=red] (03) at (8.8,-0.5) {};
            \node[shape=circle,draw=black,fill=white,fill=blue] (04) at (7.3,-0.5) {};
            \node[shape=circle,draw=black,fill=white,fill=red] (11) at (7.3,1.5) {};
            \node[shape=circle,draw=black,fill=white,fill=blue] (12) at (8.8,1.5) {};
            \node[shape=circle,draw=black,fill=white,fill=red] (13) at (8.8,3) {};
            \node[shape=circle,draw=black,fill=white,fill=blue] (14) at (7.3,3) {};
            \node[shape=circle,draw=black,scale=0.8] (08) at (9.5,0.5) {};
            \node[shape=circle,draw=black,scale=0.7] (09) at (10.3,-2) {$y_1$};
            \node[shape=circle,draw=black,scale=0.7] (18) at (9.5,4) {};
            \node[shape=circle,draw=black,scale=0.7] (19) at (10.3,1.5) {$y_2$};
            \path [-,red,thick] (01) edge node {} (02);    
            \path [-] (01) edge node {} (03);    
            \path [-,blue,thick] (01) edge node {} (04);  
            \path [-] (01) edge node {} (12);    
            \path [-] (01) edge node {} (13);     
            \path [-,blue,thick] (02) edge node {} (03);    
            \path [-,blue,thick] (02) edge node {} (04);
            \path [-] (02) edge node {} (11);            
            \path [-] (02) edge node {} (14);    
            \path [-,red,thick] (03) edge node {} (04);
            \path [-] (03) edge node {} (11);
            \path [-] (03) edge node {} (12);
            \path [-] (03) edge node {} (14);
            \path [-] (04) edge node {} (11);
            \path [-] (04) edge node {} (12);
            \path [-] (04) edge node {} (13);
            \path [-,red,thick] (11) edge node {} (13);    
            \path [-,red,thick] (11) edge node {} (12);    
            \path [-,blue,thick] (11) edge node {} (14);     
            \path [-,blue,thick] (12) edge node {} (13);    
            \path [-] (12) edge node {} (14);
            \path [-,red,thick] (13) edge node {} (14);
            \draw[-,red,thick] (01) to[out=70, in= -70] (11); 
            \draw[-] (01) to[out=70, in= -70] (14); 
            \draw[-,blue,thick] (04) to[out=70, in= -70] (14); 
            \draw[-,blue,thick] (02) to[out=110, in= -110] (12); 
            \draw[-] (02) to[out=110, in= -110] (13); 
            \draw[-,red,thick] (03) to[out=110, in= -110] (13); 
            \path [-,blue,thick] (02) edge node {} (09);    
            \path [-,red,thick] (03) edge node {} (09);     
            \path [-,blue,thick] (12) edge node {} (19);    
            \path [-,red,thick] (13) edge node {} (19);     
            \path [-,blue,thick] (02) edge node {} (08);    
            \path [-,red,thick] (03) edge node {} (08);  
            \path [-] (04) edge node {} (08);  
            \path [-,blue,thick] (12) edge node {} (18);    
            \path [-,red,thick] (13) edge node {} (18);    
            \path [-] (14) edge node {} (18);  
            \draw[-] (09) to[out=-150, in= -30] (01); 
            \draw[-] (19) to[out=-150, in= -30] (11); 
            \node[shape=circle,draw=black,fill=red] (21) at (3.3,-1.5) {};
            \node[shape=circle,draw=black,fill=blue] (22) at (4.8,-1.5) {};
            \node[shape=circle,draw=black,fill=red] (23) at (4.8,0) {};
            \node[shape=circle,draw=black,fill=blue] (24) at (3.3,0) {};
            \begin{scope}[fill opacity=0]
            \filldraw[fill=yellow!70] ($(23)+(0.4,0.2)$)
                to[out=-90,in=90] ($(22) + (0.4,-0.2)$)
                to[out=-90,in=-90] ($(22) + (-0.4,-0.2)$) 
                to[out=90,in=-90] ($(23) + (-0.4,0.2)$)  
                to[out=90,in=90] ($(23) + (0.4,0.2)$)   ;
            \end{scope}    
            \begin{scope}[fill opacity=0]
            \filldraw[fill=yellow!70] ($(23)+(0,0.5)$)
                to[out=180,in=0] ($(24) + (0,0.5)$)    
                to[out=180,in=90] ($(24) + (-0.5,0)$)
                to[out=-90,in=90] ($(22) + (-0.6,0)$) 
                to[out=-90,in=180] ($(22) + (0,-0.5)$) 
                to[out=0,in=-90] ($(22) + (0.6,0)$)
                to[out=90,in=-90] ($(23) + (0.6,0)$)        
                to[out=90,in=0] ($(23) + (0,0.5)$)   ;
            \end{scope} 
            \begin{scope}[fill opacity=0]
            \filldraw[fill=yellow!70] ($(23)+(0,0.6)$)
                to[out=180,in=0] ($(21) + (0,0.5)$)    
                to[out=180,in=90] ($(21) + (-0.5,0)$)
                to[out=-90,in=180] ($(21) + (0,-0.6)$)
                to[out=0,in=180] ($(22) + (0,-0.6)$) 
                to[out=0,in=-90] ($(22) + (0.7,0)$)
                to[out=90,in=-90] ($(23) + (0.7,0)$)        
                to[out=90,in=0] ($(23) + (0,0.6)$)   ;
            \end{scope}        
            \node[shape=circle,draw=black,fill=red] (31) at (3.3,1.5) {};
            \node[shape=circle,draw=black,fill=blue] (32) at (4.8,1.5) {};
            \node[shape=circle,draw=black,fill=red] (33) at (4.8,3) {};
            \node[shape=circle,draw=black,fill=blue] (34) at (3.3,3) {};
            \begin{scope}[fill opacity=0]
            \filldraw[fill=yellow!70] ($(33)+(0.4,0.2)$)
                to[out=-90,in=90] ($(32) + (0.4,-0.2)$)
                to[out=-90,in=-90] ($(32) + (-0.4,-0.2)$) 
                to[out=90,in=-90] ($(33) + (-0.4,0.2)$)  
                to[out=90,in=90] ($(33) + (0.4,0.2)$)   ;
            \end{scope}    
            \begin{scope}[fill opacity=0]
            \filldraw[fill=yellow!70] ($(33)+(0,0.5)$)
                to[out=180,in=0] ($(34) + (0,0.5)$)    
                to[out=180,in=90] ($(34) + (-0.5,0)$)
                to[out=-90,in=90] ($(32) + (-0.6,0)$) 
                to[out=-90,in=180] ($(32) + (0,-0.5)$) 
                to[out=0,in=-90] ($(32) + (0.6,0)$)
                to[out=90,in=-90] ($(33) + (0.6,0)$)        
                to[out=90,in=0] ($(33) + (0,0.5)$)   ;
            \end{scope} 
            \begin{scope}[fill opacity=0]
            \filldraw[fill=yellow!70] ($(33)+(0,0.6)$)
                to[out=180,in=0] ($(31) + (0,0.5)$)    
                to[out=180,in=90] ($(31) + (-0.5,0)$)
                to[out=-90,in=180] ($(31) + (0,-0.6)$)
                to[out=0,in=180] ($(32) + (0,-0.6)$) 
                to[out=0,in=-90] ($(32) + (0.7,0)$)
                to[out=90,in=-90] ($(33) + (0.7,0)$)        
                to[out=90,in=0] ($(33) + (0,0.6)$)   ;
            \end{scope}       
        \begin{scope}[fill opacity=0,dashed]
        \filldraw[fill=yellow!70] ($(21)+(-0.9,-0.1)$)
            to[out=-90,in=-90] ($(22) + (0.9,-0.1)$)    
            to[out=90,in=-90] ($(33) + (0.9,0.2)$)
            to[out=90,in=90] ($(34) + (-0.8,0.2)$)
            to[out=-90,in=90] ($(21) + (-0.8,-0.1)$) ;
        \end{scope}    
        \end{tikzpicture}
        \caption{A mapping of a panchromatic $2$-coloring solution in $H$ to a $2$-CIST solution in $G'$}
        \label{alpha9}
        \end{center}
    \end{figure}
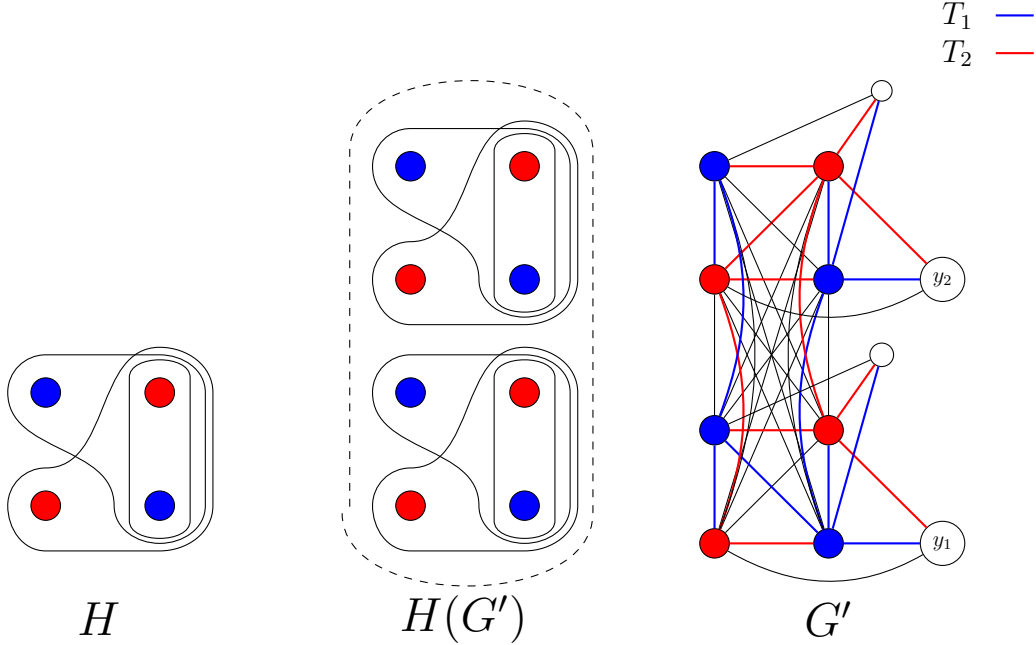    
    Hence, this constitutes a polynomial reduction from the panchromatic $k$-coloring problem to \textsc{$k$-CIST} in split graphs.
\end{proof}

\section{Concluding remarks}\label{sec5}
We have shown that the number of CIST in a split graph $G$ is at most $\chi_p^2$ or $\chi_p^2 + 1$, where $\chi_p^2=\chi_p^2(H(G))$ is the bipanchromatic number of $H(G)$. Moreover, from the proof of Theorem \ref{thm4}, we have $ \chi_p(H) - \left\lceil \frac{\alpha}{2} \right\rceil  \le \chi^2_p(H) \le \chi_p(H)$ for every hypergraph $H$, where $\alpha = \alpha_{\chi_p(H)}(H)$ is the minimum number of unique colors among all panchromatic $\chi_p(H)$-colorings of $H$. As bipanchromatic coloring is a new type of coloring, it would be useful to prove a stronger result relating the panchromatic and bipanchromatic numbers of a hypergraph $H$. Specifically, it would be nice to prove that, given any panchromatic $\chi_p(H)$-coloring of $H$ minimizing $\alpha$, pairing the unique colors two by two until no unique color remains yields exactly a bipanchromatic $\chi^2_p$-coloring, giving for any hypergraph $H$:
\begin{equation}
    \label{eq3}
    \chi^2_p(H) = \chi_p(H) -  \left\lceil \frac{\alpha_{\chi_p(H)}(H)}{2} \right\rceil
\end{equation}
To test Equation~(\ref{eq3}), we develop integer linear programming formulations (see Appendix \ref{appA}) for the panchromatic and bipanchromatic coloring problems and the minimization of unique colors problem. We randomly generate a set of $100$ hypergraphs with $n$ vertices and $m$ hyperedges for each pair $(n,m)$ such that $n \in \{ 4,5,\dots,13 \}$ and $m \in \{ n-1,n, n+1, \dots,n+8 \}$. We observed that each generated hypergraph satisfies (\ref{eq3}). 

These positive tests, along with the natural process of obtaining a bipanchromatic coloring from an optimal panchromatic coloring by pairing up the unique colors, lead us to propose the following conjecture:
\begin{conjecture}
    Every hypergraph $H$ satisfies $\chi^2_p (H)= \chi_p (H) -  \left\lceil \frac{\alpha_{\chi_p(H)}(H)}{2} \right\rceil$.
    \label{conj}
\end{conjecture}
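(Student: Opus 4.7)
The plan is to prove the missing direction
\begin{equation*}
\chi_p^2(H) \;\le\; \chi_p(H) - \left\lceil \alpha/2 \right\rceil, \qquad \alpha := \alpha_{\chi_p(H)}(H),
\end{equation*}
since the reverse inequality is already contained in the proof of Theorem~\ref{thm4}. Setting $t = \chi_p(H)-\chi_p^2(H)$, this is equivalent to showing $\alpha \le 2t$: the minimum number of unique colors forced in any optimal panchromatic coloring is bounded by twice the gap between the panchromatic and bipanchromatic numbers.

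My approach is to interpolate through the sequence $\alpha_k(H)$ for $\chi_p^2(H)\le k \le \chi_p(H)$. One has $\alpha_{\chi_p^2(H)}(H)=0$, because any bipanchromatic coloring is a panchromatic coloring with no unique color, and $\alpha_{\chi_p(H)}(H)=\alpha$. Hence, if the single-step inequality
\begin{equation*}
\alpha_{k+1}(H) \;\le\; \alpha_k(H) + 2 \qquad \text{for every } \chi_p^2(H)\le k < \chi_p(H)
\end{equation*}
holds, telescoping over the $t$ steps yields $\alpha \le 2t$ at once.

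To attack the single-step bound, the first natural operation is a \emph{transversal split}: starting from an optimal panchromatic $k$-coloring $\varphi_k=(V_1,\dots,V_k)$ realising $\alpha_k(H)$, partition a non-singleton class as $V_i=A\sqcup B$ in such a way that both $A$ and $B$ are transversals of $H$. The result is a panchromatic $(k+1)$-coloring with at most two additional unique colors (one for each part of size one), and so $\alpha_{k+1}(H) \le \alpha_k(H)+2$ whenever such a split is available.

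The main obstacle, which I anticipate to be the technical heart of the proof, is that an optimal $\varphi_k$ may admit no transversal split at all — every non-singleton class can already be a minimal transversal. I would handle this by enriching the local moves with a \emph{$3$-for-$2$ rearrangement} of two classes $V_i, V_j$, namely a tripartition $V_i \cup V_j = A \sqcup B \sqcup C$ into three transversals of $H$ with at most two singletons among $A,B,C$. The crucial structural lemma I would aim to prove is that whenever $k<\chi_p(H)$, at least one of these two moves is available starting from some optimal panchromatic $k$-coloring. A natural route is to fix an optimal panchromatic $(k+1)$-coloring $\psi$ and show, by a case analysis driven by the universal set $Q=\{v\in V : v\in e \text{ for all } e\in\mathcal{E}\}$ (where all unique colors must live), that one can merge two well-chosen classes of $\psi$ to obtain a panchromatic $k$-coloring whose merged class is splittable back into the two original classes of $\psi$. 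The delicate part will be ruling out pathological instances where no such merge-and-split reconstruction is possible; an induction on $|Q|$ or on $|V(H)|+|\mathcal{E}(H)|$, combined with the clean decomposition $\chi_p(H-S)=\chi_p(H)-|S|$ for $S$ the singleton support of an optimal panchromatic coloring, should eventually reduce the problem to hypergraphs with strictly smaller universal vertex set.
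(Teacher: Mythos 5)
You have not proved the statement, and in fact the paper does not prove it either: it is stated as Conjecture~\ref{conj}, an open problem supported only by ILP experiments on random hypergraphs (the inequality $\chi_p - \lceil \alpha/2\rceil \le \chi_p^2$ being the only direction established, via Theorem~\ref{thm4}). Your framing of the missing direction as $\alpha \le 2t$ with $t=\chi_p(H)-\chi_p^2(H)$, and the telescoping reduction to the single-step bound $\alpha_{k+1}(H)\le \alpha_k(H)+2$ for $\chi_p^2(H)\le k<\chi_p(H)$, is a sound and reasonable strategy (the quantities $\alpha_k$ are indeed well defined on that range, and $\alpha_{\chi_p^2(H)}(H)=0$). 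But the entire burden then rests on the ``crucial structural lemma'' that from some optimal panchromatic $k$-coloring either a transversal split or a $3$-for-$2$ rearrangement is available, and you do not prove it; you only announce that you would aim to. That lemma is essentially the conjecture itself, so the proposal is a research plan rather than a proof.

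Moreover, the route you sketch toward that lemma has a direction-of-inequality problem. Fixing an optimal panchromatic $(k+1)$-coloring $\psi$ and merging two of its classes does produce a panchromatic $k$-coloring that is trivially ``splittable back,'' but that merged coloring need not realise $\alpha_k(H)$: merging only gives $u(\mathrm{merged})\le u(\psi)=\alpha_{k+1}(H)$ and $u(\mathrm{merged})\ge \alpha_k(H)$, so the chain $\alpha_{k+1}(H)\le u(\mathrm{merged})+2$ cannot be closed to $\alpha_{k+1}(H)\le\alpha_k(H)+2$. The hard case you correctly identify --- optimal $k$-colorings in which every non-singleton class is a minimal transversal, so no split exists --- is exactly where the argument currently gives nothing, and the proposed induction on the universal vertex set $Q$ is not developed far enough to rule it out. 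Until that single-step lemma (or some other mechanism controlling $\alpha_{k+1}$ against $\alpha_k$) is actually established, the statement remains a conjecture.
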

This conjecture, if true, would yield tight bounds on the maximum number of CIST in a split graph. By Theorems \ref{lemme1}, \ref{thm4} and \ref{thm7}, we have:
\begin{prop}
    Let $G = (D \cup I,E)$ be a split graph. If Conjecture \ref{conj} is true, then the maximum number $M$ of CIST of $G$ satisfies $$\chi_p (H(G)) -  \left\lceil \frac{\alpha}{2} \right \rceil \leq M \leq \chi_p (H(G)) -  \left\lceil \frac{\alpha}{2}\right \rceil + 1,$$ 
    where $\alpha=\alpha_{\chi_p(H(G))}(H(G))$.
    \label{prop1}
\end{prop}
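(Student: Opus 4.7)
The plan is to derive the two bounds separately from the already-established theorems, with Conjecture~\ref{conj} entering only in the upper bound.

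For the lower bound, I would invoke Theorem~\ref{thm4} with $k = \chi_p(H(G))$. By definition of $\chi_p$, the hypergraph $H(G)$ is panchromatically $\chi_p(H(G))$-colorable, so the first part of Theorem~\ref{thm4} immediately guarantees that $G$ admits at least $\chi_p(H(G)) - \lceil \alpha/2 \rceil$ completely independent spanning trees, where $\alpha = \alpha_{\chi_p(H(G))}(H(G))$. This matches the left-hand side of the inequality exactly, and importantly does not rely on the conjecture at all. Theorem~\ref{lemme1} is implicit here as the building block used inside Theorem~\ref{thm4}, but plays no direct role once Theorem~\ref{thm4} is invoked as a black box.

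For the upper bound, I would apply Theorem~\ref{thm7}, which asserts that if $k = \chi_p^2(H(G))$ then $G$ does not admit $k+2$ CIST, so $M \le \chi_p^2(H(G)) + 1$. Here Conjecture~\ref{conj} is used as a substitution rule: replacing $\chi_p^2(H(G))$ by $\chi_p(H(G)) - \lceil \alpha/2 \rceil$ yields $M \le \chi_p(H(G)) - \lceil \alpha/2 \rceil + 1$, which is the claimed right-hand side. Combining with the lower bound from the previous paragraph finishes the argument, and the trivial consistency check $\chi_p - \lceil \alpha/2 \rceil \le \chi_p - \lceil \alpha/2 \rceil + 1$ is automatic.

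The derivation itself is essentially a one-line combination of cited results followed by a conjectural substitution, so there is no real technical obstacle inside this proof. The entire difficulty is displaced onto Conjecture~\ref{conj}: without it, the strongest unconditional upper bound we can write is $M \le \chi_p^2(H(G)) + 1$ from Theorem~\ref{thm7}, which cannot be re-expressed in terms of $\chi_p$ and $\alpha$ in the form desired by the proposition. Hence my proposal is to state the proof as a direct assembly of Theorems~\ref{thm4} and~\ref{thm7} under the hypothesis that Conjecture~\ref{conj} holds, and to make explicit in the write-up that only the upper bound is conditional.
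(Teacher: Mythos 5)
Your proof is correct and takes essentially the same route as the paper, which obtains the result immediately from Theorems~\ref{lemme1}, \ref{thm4} and \ref{thm7}: the lower bound comes from Theorem~\ref{thm4} applied with $k=\chi_p(H(G))$, and the upper bound from Theorem~\ref{thm7} together with the substitution $\chi_p^2(H(G)) = \chi_p(H(G)) - \left\lceil \frac{\alpha}{2} \right\rceil$ supplied by Conjecture~\ref{conj}. Your remark that only the upper bound is conditional on the conjecture is accurate.
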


\bibliographystyle{abbrvnat}
\bibliography{reference.bib}

\appendix
\appendixpage
\addappheadtotoc
\section{Linear modeling} \label{appA}

Let $H = (V,\mathcal{E})$ be a hypergraph such that $|D| = n$ and $|I| = m$. Let $C = \{1,2,\dots,c\}$ denote a set of colors. Let $A = (a_{ij})$ be the incidence matrix of $H$ such that $a_{ij} = 1$ if the hyperedge $i$ contains the vertex $j$, otherwise $a_{ij} = 0$. 
\subsection{The panchromatic coloring problem}
The goal is to determine the panchromatic number of $H$. In order to do that we introduce the variable $k_p$ that indicates if a color $p$ is used at least once, as well as $x_{jp}$ that indicates if a vertex $j$ is colored by the color $p$. These variables are defined as:
\begin{center}
$ k_p \ \ = \left\lbrace\begin{tabular}{p{1cm} p{10cm}}
1 &  \mbox{if the color $p$ is used by at least one vertex, $ p \in [1,c], $}\\
0 & \mbox{otherwise}. \\
\end{tabular}\right.
$
\end{center}
\begin{center}
$ x_{jp} \ \ = \left\lbrace\begin{tabular}{p{1cm} p{10cm}}
1 &  \mbox{if the vertex $j$ is assigned the color $p$, $j \in [1,n],$ $p \in [1,c], $}\\
0 & \mbox{otherwise}. \\
\end{tabular}\right.
$
\end{center}
Since the value of $c$ cannot be greater than $n$, let us take $c =n$. Our ILP formulation of the panchromatic coloring problem is: 

Maximize $\sum\limits_{p=1}^n k_{p}$

subject to 
\begin{align}[left=\empheqlbrace]
& x_{jp} \le k_p && \forall j \in [1,n],\ p \in [1,c] \label{a} \\
& \sum\limits_{p = 1}^c x_{jp}=1 && \forall j \in [1,n] \label{b}\\
& \sum\limits_{j = 1}^n x_{jp} a_{ij}\ge k_{p} && \forall i \in [1,m], p \in [1,c] \label{c}\\
& x_{jp},k_{p}, a_{ij} \in \{0,1\} && \forall i \in [1,m], j \in [1,n], p \in [1,c] 
\end{align}
Eq. (\ref{a}) guarantee that if a color is not used, then it cannot be assigned to any vertex. Eq. (\ref{b}) ensures that only one color is assigned to each vertex. Also, by Eq. (\ref{c}), if a color is used, it must appear in every hyperedge.

\subsection{The bipanchromatic coloring problem}
To determine the bipanchromatic number of $H$, the previous model can be modified by adding the constraint that each color appears at least twice in the graph: 
        \begin{center}
        \begin{tabular}{p{3.1cm} p{3.5cm}}
        $ \sum\limits_{j = 1}^n x_{jp}  \geq 2 \times k_{p} $ & $\forall p \in [1,\chi_p] $   \\
        \end{tabular}
        \end{center} 
Since $\chi^2_p \leq \chi_p$, let us set $c = \chi_p$. This implies that in order to determine the bipanchromatic number of $H$, the panchromatic number of $H$ has to be found first. The objective function is the same as for the panchromatic coloring problem.

\subsection{The minimum number of unique colors problem}
To determine the minimum number of unique colors $\alpha_{\chi_p(H)}(H)$ among all panchromatic $\chi_p$-colorings of the hypergraph $H$, we introduce the variable $v_p$ that indicates if a color $p$ is unique, it is defined as:
\begin{center}
$ v_{p} \ \ = 
\left\lbrace\begin{tabular}{p{1cm} p{10cm}}
1 & \mbox{if the color $p$ is a unique color, $p \in [1,\chi_p], $}\\
0 & \mbox{otherwise}. \\
\end{tabular}\right.
$\end{center}
Our ILP formulation of the minimum number of unique colors problem is: 

Minimize $\sum\limits_{p=1}^{\chi_p} v_{p}$

subject to 
\begin{align}[left=\empheqlbrace]
     & \sum\limits_{j = 1}^n x_{jp} a_{ij} \geq 1 && \forall i \in [1,m], p \in [1,\chi_p] \label{a1}\\
     & \sum\limits_{p = 1}^{\chi_p} x_{jp} = 1 && \forall j \in [1,n] \label{b1}\\   
     & \sum\limits_{j = 1}^n x_{jp} + v_p \geq 2 && \forall p \in [1,\chi_p] \label{c1}\\
     & x_{jp},a_{ij},v_j \in \{0,1\} && \forall i \in [1,m], j \in [1,n], p \in [1,\chi_p] 
\end{align}

Eq. (\ref{c1}) states that if a color $p$ is unique, \textit{i.e.}, $\sum_{j = 1}^n x_{jp} = 1$, then it implies that $v_p = 1$. However, if the color $p$ is not unique, that is, $\sum_{j = 1}^n x_{jp} \ge 2$, then, since the objective of the problem is to minimize $\sum v_p$, it follows that $v_p = 0$.
\end{document}